\titleformat{\section}{\Large}{\textbf{\thesection .}}{1em}{\textbf{#1}}
\newtheorem{thm}{Theorem}[section]
\newtheorem{cor}[thm]{Corollary}
\newtheorem{lem}[thm]{Lemma}
\newtheorem{prop}[thm]{Proposition}
\theoremstyle{definition}
\newtheorem{defn}[thm]{Definition}
\theoremstyle{remark}
\numberwithin{equation}{section}
\renewenvironment{proof}{{\vspace{-1em}\quad \textit{Proof.}}}{\hfill$\square$}
\begin{document}
\renewcommand{\thefootnote}{\fnsymbol{footnote}}
\title{Forbidden Patterns and the Alternating Derangement Sequence}%
\author{\vspace{-3,5\baselineskip}Enrique Navarrete$^{\ast}$}\footnotetext[1]{Grupo ANFI, Universidad de Antioquia, Medell\'in, Colombia.}%
\date{}%
\makeatletter
\def\maketitle{%
\bgroup
\par\centering{\LARGE\@title}\\[3em]%
\ \par
{\@author}\\[4em]%
\egroup
}

\maketitle
\begin{abstract}
  In this note we count linear arrangements that avoid certain\linebreak patterns and show their connection to the derangement numbers. We introduce the sequence $\langle D_n\rangle$, which counts the linear arrangements that avoid the patterns $12, 23,\ldots, (n-1)n, n1$, and show that this sequence almost follows the derangement sequence itself since the number of its odd terms is one more than the derangement numbers while the number of its even terms is one less. We also express the derangement numbers in terms of these and other linear arrangements.  Finally, we relate these arrangements to permutations and show that both the arrangements\linebreak described above as well as deranged permutations are equidistributed, in the sense defined in Section 5.\\[1em]
  \textit{Keywords}:  Derangements, permutations, linear arrangements, forbidden patterns, fixed points, bijections.
\end{abstract}
----------------------------------------------------------------
\section{Introduction}

In this note we will be counting the number of ways in which the integers $1, 2, \ldots, n$ can be arranged in a line so that some patterns are forbidden or blocked. We will be mainly concerned with the arrangements that block the patterns $12, 23, \ldots , (n-1)n, n1$. This problem can be naturally phrased in terms of permutations of the integer set $S = \{1, 2, \ldots, n\}$ that avoid the pairs just mentioned. Hence in Section 5 we will be identifying linear arrangements with some permutations in the symmetric group.

Since we will be discussing several forbidden patterns, the following definitions are in order:\vspace{-1em}
\begin{description}
  \item[$S_n$] :=  the set of all linear arrangements on the set of integers $\{1,2, \ldots, n\}$.
  \item[$\{d_n\}$]  :=  the set of  linear arrangements of  $S_n$ that avoid the patterns\linebreak $12, 23, \ldots, (n-1)n$.
  \item[$\{d_{n1}\}$] :=  the subset of  linear arrangements of $\{d_n\}$ that allow the pattern $n1$.
  \item[$\{D_n\}$] :=  the set of linear arrangements of  $S_n$ that avoid the patterns\linebreak $12, 23, \ldots, (n-1)n, n1$.
  \item[$\{Der_n\}$] :=  the set of deranged arrangements of the integers $\{1,2, \ldots, n\}$.
  \item[$d_n$] := cardinality of the set $\{d_n\}$.
  \item[$d_{n1}$] := cardinality of the set $\{d_{n1}\}$.
  \item[$D_n$] := cardinality of the set $\{D_n\}$.
  \item[$\langle D_n\rangle$] := the sequence of numbers $D_n$, $n = 0,1,2,\ldots$
  \item[$Der_n$] := the nth derangement number, \textit{ie}.
  \begin{equation}\label{eq1}
    Der_{n}=n!\sum_{k=0}^{n}\frac{(-1)^{n}}{k!}.
  \end{equation}
\end{description}

Note that in all cases we will be discussing linear arrangements (as opposed to circular ones).

\section{Main Lemmas and Proposition}

We first see that there are $n!$ possible linear arrangements in $S_n$, but not all of them will be valid for our counting purposes. The largest subset of arrangements we will be considering is $\{d_n\}$, the set of  linear arrangements of $S_n$ that avoid the patterns $12, 23, \ldots, (n-1)n$.   Hence the set of forbidden patterns is $S_n - \{d_n\}$. This set will be used at the end of the following main result.

\begin{lem}\label{lemm21}
For $n \geq 1$, $d_n = D_{n-1} + D_n$.
\end{lem}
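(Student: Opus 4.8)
The plan is to split the set $\{d_n\}$ according to whether the pattern $n1$ occurs. By definition every arrangement counted by $d_n$ avoids $12, 23, \ldots, (n-1)n$; such an arrangement either also avoids $n1$, in which case it lies in $\{D_n\}$, or it contains $n1$, in which case it lies in $\{d_{n1}\}$. These alternatives are mutually exclusive and exhaustive, so $\{d_n\} = \{D_n\} \sqcup \{d_{n1}\}$ and hence $d_n = D_n + d_{n1}$. It therefore remains to prove $d_{n1} = D_{n-1}$, which I would do by an explicit bijection rather than by manipulating the inclusion--exclusion formulas.

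For the bijection, I would send an arrangement $A \in \{d_{n1}\}$ to the arrangement $B$ of $\{1, 2, \ldots, n-1\}$ obtained by deleting the symbol $n$. Since $A$ contains the block $n1$, the symbol $n$ sits immediately before $1$; deleting it either leaves $1$ at the front of $B$ (when $n$ was first) or merges the predecessor $x$ of $n$ with $1$, creating a single new adjacency $x1$. The inverse map $\psi$ reinserts $n$ immediately before the symbol $1$, producing the two new adjacencies $xn$ and $n1$.

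The substantive steps are the two well-definedness checks, and I expect these to be the crux. For the deletion map: no forbidden pattern among $12, \ldots, (n-2)(n-1)$ can be created, because the only new adjacency $x1$ has second symbol $1$ and so is never of the form $i(i+1)$, while every inherited adjacency already occurs in $A$ where it is forbidden; thus the only danger is that $B$ acquires the extra pattern $(n-1)1$, which would force $x = n-1$ — impossible, since $A$ avoids $(n-1)n$ and so the predecessor of $n$ cannot be $n-1$. Symmetrically, for $\psi$ the new adjacency $n1$ is exactly the allowed pattern, and $xn = (n-1)n$ would force the predecessor $x$ of $1$ in $B$ to equal $n-1$, which is precisely what $B \in \{D_{n-1}\}$ forbids. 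The two wrap-around constraints thus dovetail: avoiding $(n-1)n$ in $\{d_n\}$ is exactly what guarantees the image avoids $(n-1)1$, and conversely. Since the maps are visibly mutually inverse, $d_{n1} = D_{n-1}$, and combining with $d_n = D_n + d_{n1}$ gives the claim.

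Finally I would treat the base case $n = 1$ separately, as it sits slightly outside the bijective argument: here the single arrangement $1$ has no adjacency at all, so $\{d_1\} = \{D_1\}$ and $\{d_{11}\} = \emptyset$, whence the identity $d_1 = D_0 + D_1$ reduces to the convention $D_0 = 0$ (equivalently, the bijection above forces $d_{11} = D_0 = 0$), consistent with the values used throughout.
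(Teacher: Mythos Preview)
Your proof is correct and follows essentially the same approach as the paper: both split $\{d_n\}$ as the disjoint union $\{D_n\}\sqcup\{d_{n1}\}$ and then establish a bijection between $\{d_{n1}\}$ and $\{D_{n-1}\}$ via the ``insert/delete $n$ in front of $1$'' map, with the key well-definedness check being that the predecessor of $n$ in an element of $\{d_{n1}\}$ cannot be $n-1$ (equivalently, the predecessor of $1$ in an element of $\{D_{n-1}\}$ cannot be $n-1$). You describe the bijection starting from the deletion direction and verify it is a two-sided inverse, whereas the paper starts from the insertion direction and argues surjectivity by ruling out other possible preimages; your presentation is slightly cleaner but the content is the same.
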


\begin{proof}
The Lemma is true for the case $n = 1$ if we define $D_0 = 0$, $D_1 = 1$, $d_1 = 1$.  The cases $n = 2, 3$ are easily checked, since $d_2 = 1$, $D_2 = 0$, $d_3 = 3$, $D_3 = 3$, (see Table \ref{tabA1} in the Appendix for arrangements in $\{D_n\}$ and $\{d_n\}$ up to $n = 5$).

We will show in general that $\{d_n\} = \{D_{n-1}\} \cup \{D_n\}$ is a disjoint union by actually showing that $\{d_n\} = \{d_{n1}\} \cup \{D_n\}$ is a disjoint union and that\linebreak $\{d_{n1}\}\leftrightarrow\{D_{n-1}\}$ is a bijection.

To show the bijection between the sets $\{d_{n1}\}$ and $\{D_{n-1}\}$ we will construct arrangements in $\{d_{n1}\}$ starting from arrangements in $\{D_{n-1}\}$ and the map $\varphi:\{D_{n-1}\}\rightarrow\{d_{n1}\}$ that inserts the digit $n$ in front of the digit 1 in linear arrangements in $\{D_{n-1}\}$. Note that arrangements in $\{D_{n-1}\}$ only contain digits $1,2, \ldots, n-1$, but this map produces arrangements of length $n$ in $\{d_{n1}\}$. We claim this map creates all the arrangements in $\{d_{n1}\}$ \textit{ie}. arrangements in $\{d_n\}$ that allow the pattern $n1$. We have to show that the arrangements obtained this way are distinct, that they contain no forbidden patterns $12, 23, \ldots, (n-1)n$, that $\{d_n\}$ is the disjoint union $\{d_{n1}\} \cup \{D_n\}$, and that all the arrangements in $\{d_{n1}\}$ are formed by the map described above.  We show each of these properties below.

\begin{enumerate}
  \item The sets $\{d_{n1}\}$ and $\{D_n\}$ contain arrangements which are $n$ digits long and are disjoint since, by construction, arrangements in $\{d_{n1}\}$ allow the pattern $n1$ while those in $\{D_n\}$ avoid it.
  \item To show that the linear arrangements in $\{d_{n1}\}$ produced by the map are valid, $ie$.\hspace{-0.8mm} contain no forbidden patterns $12, 23, \ldots, (n-1)n$, we see that the only problematic arrangements would be those for which, when inserting an $n$ before a 1, we create the pattern $(n-1)n1$, which is forbidden due to the pattern $(n-1)n$. However, this will never happen since all the arrangements in $\{D_{n-1}\}$, from which $\{d_{n1}\}$ is constructed, avoid the pattern $(n-1)1$.
  \item To show the map is 1-1 is trivial, since by removing the digit $n$ in\linebreak arrangements in $\{d_{n1}\}$ we get back arrangements in $\{D_{n-1}\}$, which are distinct and valid, by the previous step.
  \item To show that the map $\{D_{n-1}\}\rightarrow\{d_{n1}\}$ is onto, we need to show that all the arrangements in $\{d_{n1}\}$ are produced from $\{D_{n-1}\}$ by the map described above and that there is no other way to produce them.
\end{enumerate}

To show this, suppose we want to create arrangements in $\{d_{n1}\}$ from the larger set $\{d_{n-1}\}$ (as opposed to $\{D_{n-1}\}$). If we start from arrangements in $\{d_{n-1}\}$, there are members of this set that allow the pattern $(n-1)1$, hence inserting the digit $n$ before 1 will produce the pattern $(n-1)n$, which is forbidden, so this case is settled.

Finally, since $S_{n-1} = \{d_{n-1}\} \cup \{S_{n-1} - d_{n-1}\}$ is a disjoint union, the last possibility would be that if we insert the digit $n$ before 1 in the forbidden arrangements in $\{S_{n-1} - d_{n-1}\}$ we might break a forbidden pattern and create a valid arrangement in $\{d_{n1}\}$ with the pattern $n1$ in it. However, this cannot happen since when we insert $n$ before 1, the forbidden patterns\linebreak $12, 23, \ldots, (n-1)n$ will remain intact in the set $\{S_{n-1} - d_{n-1}\}$ of forbidden arrangements. Therefore the only way to produce valid arrangements in $\{d_n\}$ that allow the pattern $n1$ is to start with the set $\{D_{n-1}\}$ and form the\linebreak arrangements in $\{d_{n1}\}$ by the map described above.

Since the previous steps showed that $\{d_n\} = \{d_{n1}\} \cup \{D_n\}$ is a disjoint union and that $\{d_{n1}\}\leftrightarrow\{D_{n-1}\}$ is a bijection, this implies the Lemma for the set cardinalities.
\end{proof}

We record explicitly the result of the bijection below.

\begin{cor}\label{coro22}
There are exactly $D_{n-1}$  linear arrangements in $\{d_n\}$ that contain the pattern $n1$.
\end{cor}

\begin{proof}
This follows from the previous Lemma by the bijection\linebreak $\{d_{n1}\}\leftrightarrow\{D_{n-1}\}$.
\end{proof}

Note that the counting relation proved above is very similar to a well known formula for $d_n$, which we state below as a lemma:

\begin{lem}\label{lemm23}
For all $n \geq 1$, $d_n = Der_n + Der_{n-1}$.
\end{lem}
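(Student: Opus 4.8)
The plan is to prove the identity by inclusion–exclusion directly on the set $\{d_n\}$, and then to reconcile the resulting alternating sum with the standard inclusion–exclusion expression for the derangement numbers. For each $i \in \{1, \ldots, n-1\}$, let $A_i \subseteq S_n$ denote the set of arrangements that \emph{do} contain the pattern $i(i+1)$, i.e.\ in which $i$ is immediately followed by $i+1$. Then by definition $\{d_n\} = S_n \setminus (A_1 \cup \cdots \cup A_{n-1})$, so the principle of inclusion–exclusion gives $d_n = \sum_{T \subseteq \{1,\ldots,n-1\}} (-1)^{|T|}\,|A_T|$, where $A_T = \bigcap_{i \in T} A_i$.

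The key step is to evaluate $|A_T|$, and the point is that it depends only on $|T|$. Enforcing the successions indexed by $T$ amounts to gluing $i$ and $i+1$ into a single block whenever $i \in T$; since the enforced edges $\{i,i+1\}$ form a subgraph of the path $1$–$2$–$\cdots$–$n$, they merge the integers $1,\ldots,n$ into exactly $n-|T|$ maximal blocks of consecutive values, each of which must appear as an increasing run. These $n-|T|$ blocks may then be ordered arbitrarily, so $|A_T| = (n-|T|)!$ irrespective of which successions were chosen. Summing over the $\binom{n-1}{k}$ subsets $T$ of each size $k$ yields
\begin{equation*}
  d_n = \sum_{k=0}^{n-1} (-1)^k \binom{n-1}{k} (n-k)!.
\end{equation*}

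It then remains to identify this sum with $Der_n + Der_{n-1}$. Here I would recall that \eqref{eq1} is equivalent to the inclusion–exclusion form $Der_m = \sum_{j=0}^{m} (-1)^j \binom{m}{j} (m-j)!$, and split the binomial coefficient via Pascal's rule $\binom{n-1}{k} = \binom{n}{k} - \binom{n-1}{k-1}$. The first resulting piece, $\sum_{k=0}^{n-1}(-1)^k\binom{n}{k}(n-k)!$, is the derangement sum for $Der_n$ with its top term $k=n$ (equal to $(-1)^n$) removed, so it equals $Der_n - (-1)^n$. After the substitution $j = k-1$, the second piece contributes the derangement sum for $Der_{n-1}$ with its top term $j=n-1$ (equal to $(-1)^{n-1}$) removed, namely $Der_{n-1} - (-1)^{n-1}$. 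Adding the two, the stray boundary terms $-(-1)^n$ and $-(-1)^{n-1}$ cancel, leaving exactly $Der_n + Der_{n-1}$.

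I expect the only delicate point to be the uniform evaluation $|A_T| = (n-|T|)!$: one must be sure that the forced successions never conflict and that a chain such as enforcing both $i(i+1)$ and $(i+1)(i+2)$ correctly merges into the single block $i(i+1)(i+2)$ rather than being double-counted. Because the index set lives on the simple path graph, this block-merging is unambiguous and the count is clean; the concluding algebra is then just careful bookkeeping of the two cancelling boundary terms. Note that I would give this direct, self-contained argument rather than route through Lemma \ref{lemm21}, since the precise relation between $D_n$ and $Der_n$ has not yet been established at this point in the paper.
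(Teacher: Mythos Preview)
Your proof is correct and follows essentially the same route as the paper: inclusion--exclusion over the $n-1$ forbidden successions to obtain $d_n=\sum_{k=0}^{n-1}(-1)^k\binom{n-1}{k}(n-k)!$, followed by an algebraic identification with $Der_n+Der_{n-1}$. The paper simply asserts the last step ``by direct computation,'' whereas you spell it out via Pascal's rule and the cancellation of the two boundary terms; your block-merging justification of $|A_T|=(n-|T|)!$ is likewise more careful than the paper's one-line claim.
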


\begin{proof}
Since $d_n$ counts the linear arrangements that avoid the patterns $12, 23, \ldots, (n-1)n$, it is easy to show that the there are a total of
\[\binom{n-1}{k}(n-k)!\]
forbidden patterns of length $k$ since the combinatorial term counts the number of ways to get such patterns while the term $(n-k)!$ counts the permutations of the patterns and the remaining elements (note that patterns are not necessarily disjoint but may overlap in pattern strings). Then using inclusion-exclusion we have that
\[d_n=\sum_{k=0}^{n-1}(-1)^k\binom{n-1}{k}(n-k)!\text{,}\]
which is equal to the right-hand side of the equation in the Lemma by direct computation.
\end{proof}

Note also that the expression for $d_n$  above is similar to the derangement formula,
\[Der_n=\sum_{k=0}^{n}(-1)^k\binom{n}{k}(n-k)!.\]
Now we get to the main proposition:
\begin{prop}\label{propo24}
For all $n \geq 1$, $D_n = Der_n + (-1)^{n-1}$.
\end{prop}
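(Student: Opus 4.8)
The plan is to play the two expressions for $d_n$ against each other. Lemma \ref{lemm21} gives $d_n = D_{n-1} + D_n$ and Lemma \ref{lemm23} gives $d_n = Der_n + Der_{n-1}$, both valid for $n \geq 1$. Equating the two eliminates $d_n$ entirely and leaves the single relation
\[
D_{n-1} + D_n = Der_{n-1} + Der_n,
\]
which already couples the quantity I care about to the derangement numbers. The whole proposition should fall out of this identity with almost no further combinatorics, so I expect the argument to be short.

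The cleanest route is to introduce the discrepancy $E_n := D_n - Der_n$ and rewrite the displayed relation as $E_{n-1} + E_n = 0$, i.e. $E_n = -E_{n-1}$. This is a trivial two-term telescoping recurrence whose solution is manifestly alternating: $E_n = (-1)^{n-1} E_1$. Since $D_1 = 1$ and $Der_1 = 0$, I have $E_1 = 1$, and therefore $E_n = (-1)^{n-1}$ for all $n \geq 1$, which is exactly the claim $D_n = Der_n + (-1)^{n-1}$. (If one prefers a direct induction instead, the same computation runs in that form: substituting the inductive hypothesis $D_{n-1} = Der_{n-1} + (-1)^{n-2}$ into $D_n = d_n - D_{n-1}$ and using $-(-1)^{n-2} = (-1)^{n-1}$ gives the result in one line.)

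The only genuine care needed is bookkeeping at the edges. I would verify the base case $n=1$ explicitly against the conventions fixed in Lemma \ref{lemm21} ($D_0 = 0$, $D_1 = 1$), and check that the combined relation $D_{n-1} + D_n = Der_{n-1} + Der_n$ is consistent even at $n=1$, where it reads $0 + 1 = Der_1 + Der_0 = 0 + 1$ using $Der_0 = 1$ from \eqref{eq1}. The ``main obstacle'' is thus not a substantive difficulty but simply pinning down the sign of $(-1)^{n-1}$ and confirming that both input lemmas are being applied within their stated ranges; once the recurrence $E_n = -E_{n-1}$ is in hand, the alternating behavior and hence the full statement are immediate.
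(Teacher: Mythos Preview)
Your proof is correct and is essentially the same as the paper's: both combine Lemmas~\ref{lemm21} and~\ref{lemm23} to obtain $D_n + D_{n-1} = Der_n + Der_{n-1}$ and then iterate/telescope from the base case. Your introduction of $E_n = D_n - Der_n$ to make the recurrence $E_n = -E_{n-1}$ explicit is a slightly cleaner packaging of the same argument the paper describes as ``iterating this expression repeatedly.''
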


\begin{proof}
Combining the two previous lemmas we have that\linebreak $D_n = Der_n + Der_{n-1} - D_{n-1}$. Iterating this expression repeatedly yields the Proposition.  Note that we have $n-1$, not $n$ in the exponent of -1 since the iteration stops after $n-1$ steps at $Der_2 = 1$.
\end{proof}

Note from the Proposition that by adding $(-1)^n$ to both sides we get\linebreak $Der_n = D_n + (-1)^n$, so that $Der_n - D_n = (-1)^n$. This means that the numbers $D_n$ can be obtained by adding in the usual derangement formula, Equation \ref{eq1} above, up to the term $n-1$, leaving out the last term, $(-1)^n$. That is, for the number of linear arrangements that avoid the patterns $12, 23, \ldots, (n-1)n, n1$, we have that
\begin{equation}\label{eq2}
  D_n=n!\sum_{k=0}^{n-1}\frac{(-1)^k}{k!}.
\end{equation}
This creates the alternating behavior referred to above, in the sense that the number of these arrangements is one less than the derangement number when $n$ is even, and one more than the derangement number when $n$ is odd.  Hence we have the sequence $\langle D_n\rangle$ starting at $\mbox{n = 0}$ that runs like\linebreak $\{0, 1, 0, 3, 8, 45, 264, 1855, 14832,\ldots\}$ (see Table \ref{tabA2} in the Appendix).

Notice that when we combine Lemmas \ref{lemm21} and \ref{lemm23} we get the equality
\[d_n=D_n+D_{n-1}=Der_n +Der_{n-1},\]
which produces the following sequence of equations starting at $n = 1$:
\begin{align*}
  &d_1 =  1 + 0 = 0 + 1 = 1 & &(n = 1);\\
  &d_2 =  0 + 1 = 1 + 0 = 1 & &(n = 2);\\
  &d_3 =  3 + 0 = 2 + 1 = 3 & &(n = 3);\\
  &d_4 =  8 + 3 = 9 + 2 = 11 & &(n = 4);\\
  &d_5 =  45 + 8 = 44 + 9 = 53 & &(n = 5)\ldots
\end{align*}

\section{Recursions followed by linear arrangements in $\boldsymbol{\{D_n\}}$}

Notice from Table \ref{tabA2} that the numbers $D_n$ are all divisible by $n$. This is not a coincidence but a result of the following lemmas and recursions for linear arrangements in $\{D_n\}$.

\begin{lem}\label{lemm31}
For all $n \geq 1$, $D_n = n Der_{n-1}$.
\end{lem}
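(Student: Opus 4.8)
The plan is to prove $D_n = n\,Der_{n-1}$ by combining the two formulas already established in the paper. From Proposition \ref{propo24} we have the closed form in Equation \ref{eq2}, namely $D_n = n!\sum_{k=0}^{n-1}\frac{(-1)^k}{k!}$, and from the definition in Equation \ref{eq1} we have $Der_{n-1} = (n-1)!\sum_{k=0}^{n-1}\frac{(-1)^k}{k!}$. The key observation is that the two sums are \emph{identical}: both run from $k=0$ to $k=n-1$ with the same summand $\frac{(-1)^k}{k!}$. (Note that Equation \ref{eq1} as printed has a typographical slip, writing $(-1)^n$ in the numerator where $(-1)^k$ is intended; I would silently use the correct standard derangement formula, which is also the one written out explicitly just after Lemma \ref{lemm23}.)

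First I would write down $D_n$ using Equation \ref{eq2} and $Der_{n-1}$ using the derangement formula with $n$ replaced by $n-1$. Then I would factor $n! = n \cdot (n-1)!$ out of the expression for $D_n$:
\[
D_n = n!\sum_{k=0}^{n-1}\frac{(-1)^k}{k!} = n\cdot(n-1)!\sum_{k=0}^{n-1}\frac{(-1)^k}{k!} = n\,Der_{n-1}.
\]
This is essentially a one-line algebraic identity once the two closed forms are in hand.

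An alternative, more self-contained route would be to prove the statement purely from the recurrences already derived, without invoking Equation \ref{eq2}. Starting from $D_n = Der_n + Der_{n-1} - D_{n-1}$ (obtained by combining Lemmas \ref{lemm21} and \ref{lemm23}), I could proceed by induction on $n$: assuming $D_{n-1} = (n-1)\,Der_{n-2}$ and using the standard derangement recurrence $Der_n = (n-1)(Der_{n-1}+Der_{n-2})$, one rearranges to obtain $D_n = n\,Der_{n-1}$. I would verify the base case directly (for instance $D_1 = 1 = 1\cdot Der_0$, with the convention $Der_0 = 1$).

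I expect the main obstacle to be bookkeeping with the boundary conventions rather than any genuine difficulty. The paper sets $D_0 = 0$ and uses $Der_2 = 1$ as the stopping point of the iteration in Proposition \ref{propo24}, so I must be careful that the claimed identity is asserted only for $n \geq 1$ and that the small cases agree with the tabulated values $\langle D_n\rangle = \{0,1,0,3,8,45,\ldots\}$; in particular the $n=2$ case gives $D_2 = 2\,Der_1 = 0$, which matches. The cleaner of the two approaches is clearly the first, since it reduces the entire claim to recognizing that the two truncated exponential sums coincide, so I would present that one and relegate the inductive verification of the base cases to a remark.
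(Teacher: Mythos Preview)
Your proposal is correct. Both your primary argument and the paper's proof rest on Proposition \ref{propo24}, but they finish with different standard facts about derangements. The paper takes the additive form $D_n = Der_n + (-1)^{n-1}$, rewrites it as $Der_n = D_n + (-1)^n$, and then compares with the recursion $Der_n = n\,Der_{n-1} + (-1)^n$ to cancel the $(-1)^n$ terms. You instead use the closed form in Equation \ref{eq2} and recognize that its truncated sum is literally the sum defining $Der_{n-1}$, so factoring $n! = n\,(n-1)!$ finishes at once. Your route is arguably the cleaner of the two, since it avoids introducing an auxiliary recursion and makes transparent \emph{why} the identity holds: $D_n$ and $Der_{n-1}$ share the same alternating partial sum of $1/k!$. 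The paper's route, on the other hand, has the small advantage of not depending on the explicit formula \ref{eq2} (only on the relation $D_n - Der_n = (-1)^{n-1}$), and it foreshadows the recursion-based corollaries that follow. Your inductive alternative is also valid but, as you note, unnecessarily heavy here.
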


\begin{proof}
From Proposition \ref{propo24} we have that  $D_n  =  Der_n + (-1)^{n-1}$,\linebreak $n \geq 1$. If we add $(-1)^n$  to both sides we have that  $Der_n = D_n + (-1)^n$. Then using a well-known recursion from the derangement numbers, namely $Der_n = n Der_{n-1} + (-1)^n$, $n \geq 1$, we equate both sides to get the equation in the Lemma.
\end{proof}

Now we get a recursion similar to the one used in the proof of the Lemma for arrangements in $\{D_n\}$.

\begin{cor}\label{coro32}
The recursion $D_n  = n[D_{n-1} + (-1)^{n-1}]$, $n \geq 1$, holds for linear arrangements in $\{D_n\}$.
\end{cor}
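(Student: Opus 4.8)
The plan is to derive the recursion purely algebraically by feeding Proposition~\ref{propo24} into Lemma~\ref{lemm31}, rather than by any fresh combinatorial argument. Lemma~\ref{lemm31} already hands us the clean product formula $D_n = n\,Der_{n-1}$, so the entire task reduces to rewriting the factor $Der_{n-1}$ in terms of $D_{n-1}$. This is exactly what Proposition~\ref{propo24} lets me do, provided I apply it at the shifted index $n-1$.

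First I would instantiate Proposition~\ref{propo24} with $n$ replaced by $n-1$, giving $D_{n-1} = Der_{n-1} + (-1)^{n-2}$. Next I would solve this for the derangement term, obtaining $Der_{n-1} = D_{n-1} - (-1)^{n-2}$. The one bookkeeping point to watch is the parity of the exponent: since $(-1)^{n-2} = (-1)^{n}$ and $-(-1)^{n} = (-1)^{n-1}$, this simplifies cleanly to $Der_{n-1} = D_{n-1} + (-1)^{n-1}$. I would then substitute this expression back into $D_n = n\,Der_{n-1}$ from Lemma~\ref{lemm31} to arrive at
\[
D_n = n\bigl[D_{n-1} + (-1)^{n-1}\bigr],
\]
which is precisely the claimed recursion.

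Since the statement asserts validity for all $n \geq 1$, I would also verify the boundary case $n = 1$ against the conventions fixed in Lemma~\ref{lemm21}, namely $D_0 = 0$ and $D_1 = 1$: the right-hand side reads $1\cdot[0 + (-1)^0] = 1 = D_1$, so the base case is consistent. The only real subtlety here is sign-tracking through the repeated parity shifts $(-1)^{n-2} = (-1)^n = -(-1)^{n-1}$; beyond that the argument is a one-line substitution, so I expect no genuine obstacle. If desired, one could instead present this as a direct corollary remark, noting that combining $D_n = n\,Der_{n-1}$ with $Der_{n-1} = D_{n-1} + (-1)^{n-1}$ is a purely formal consequence of the two results already established.
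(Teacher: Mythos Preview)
Your proposal is correct and follows essentially the same route as the paper: both derive $Der_{n-1} = D_{n-1} + (-1)^{n-1}$ from Proposition~\ref{propo24} (the paper via the rearranged form $Der_n = D_n + (-1)^n$ noted in the proof of Lemma~\ref{lemm31}) and then substitute into $D_n = n\,Der_{n-1}$. Your explicit check of the boundary case $n=1$ is a small improvement, since the shift to index $n-1$ in Proposition~\ref{propo24} strictly requires $n \geq 2$.
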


\begin{proof}
From the proof of the Lemma above we have that $Der_n = D_n + (-1)^n$, hence $Der_{n-1} = D_{n-1} + (-1)^{n-1}$. The result is immediate upon substitution in the right-hand side of Lemma \ref{lemm31}, $D_n = n Der_{n-1}$.
\end{proof}

Combining most of the lemmas we have so far, we can also get the following recursion for $D_n$, which is similar to the well-known recursion\linebreak $Der_n  = (n-1)[Der_{n-1} + Der_{n-2}]$, $n \geq  2$, for the derangement numbers.

\begin{cor}\label{coro33}
The recursion $D_n  = (n-1)[D_{n-1} + D_{n-2}] + (-1)^{n-1}$, $n \geq 2$, holds for linear arrangements in $\{D_n\}$.
\end{cor}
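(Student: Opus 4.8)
The plan is to reduce everything to the classical derangement recursion $Der_n = (n-1)[Der_{n-1} + Der_{n-2}]$ by using Proposition \ref{propo24} to translate between the $D_k$ and the $Der_k$, and then to translate back. The whole argument is a sign-bookkeeping computation, so the main work is making sure the correction terms $(-1)^{k-1}$ behave correctly under the shifts $k = n-1$ and $k = n-2$.

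First I would invoke Proposition \ref{propo24} in the forms $D_{n-1} = Der_{n-1} + (-1)^{n-2}$ and $D_{n-2} = Der_{n-2} + (-1)^{n-3}$. The key observation is that the two correction terms are of consecutive parity and hence cancel: since $(-1)^{n-2} = (-1)^n$ and $(-1)^{n-3} = (-1)^{n-1}$, their sum is $(-1)^n + (-1)^{n-1} = 0$. Adding the two equations therefore gives the clean identity
\[
D_{n-1} + D_{n-2} = Der_{n-1} + Der_{n-2}.
\]
This cancellation is really the crux of the corollary, and it is why the right-hand side of the claimed recursion needs exactly one leftover sign term $(-1)^{n-1}$ rather than a tangle of parities.

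Next I would multiply both sides by $(n-1)$ and apply the classical derangement recursion to the right-hand side, obtaining $(n-1)[D_{n-1} + D_{n-2}] = (n-1)[Der_{n-1} + Der_{n-2}] = Der_n$, valid for $n \geq 2$. Finally I would close the loop by applying Proposition \ref{propo24} once more in the form $Der_n = D_n - (-1)^{n-1}$ (equivalently $D_n = Der_n + (-1)^{n-1}$), which gives
\[
(n-1)[D_{n-1} + D_{n-2}] + (-1)^{n-1} = Der_n + (-1)^{n-1} = D_n,
\]
as claimed.

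I do not expect any genuine obstacle here; the result is an immediate corollary in the sense that it follows from already-established identities by substitution. The only point requiring care is the exponent arithmetic, both in verifying that the two correction terms cancel in the sum $D_{n-1} + D_{n-2}$ and in confirming that the single surviving term matches the $(-1)^{n-1}$ in the statement. I would also note in passing that the lower bound $n \geq 2$ is exactly what is needed for $D_{n-2}$ to be defined via the earlier conventions (with $D_0 = 0$), and that the classical recursion itself is stated for $n \geq 2$, so no boundary case falls outside the hypotheses.
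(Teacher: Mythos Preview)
Your proof is correct and follows the same overall arc as the paper's: translate to derangements via Proposition~\ref{propo24}, apply the classical recursion $Der_n = (n-1)[Der_{n-1} + Der_{n-2}]$, and translate back. The one difference is in how the key identity $D_{n-1} + D_{n-2} = Der_{n-1} + Der_{n-2}$ is established. You apply Proposition~\ref{propo24} at levels $n-1$ and $n-2$ and observe that the correction terms $(-1)^{n-2}$ and $(-1)^{n-3}$ cancel; the paper instead routes this equality through the intermediate quantity $d_{n-1}$, invoking Lemma~\ref{lemm23} ($d_{n-1} = Der_{n-1} + Der_{n-2}$) together with Lemma~\ref{lemm21} ($d_{n-1} = D_{n-1} + D_{n-2}$). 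Your route is slightly more self-contained, needing only Proposition~\ref{propo24} and the classical recursion; the paper's route reuses the earlier counting lemmas. One small caveat: Proposition~\ref{propo24} is stated only for $n \geq 1$, so at $n = 2$ your application to $D_{n-2} = D_0$ is formally outside its scope (though the formula $D_0 = Der_0 + (-1)^{-1} = 0$ does hold under the convention $D_0 = 0$); the paper's route via Lemma~\ref{lemm21}, whose proof explicitly sets $D_0 = 0$, sidesteps this quibble.
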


\begin{proof}
\hspace{-9mm}Since \mbox{$D_n=Der_n+(-1)^{n-1}$} by Proposition \ref{propo24}, and\linebreak $Der_n = (n-1)[Der_{n-1} + Der_{n-2}]$, $n \geq 2$, for the derangement numbers, we have that $D_n  = (n-1)[Der_{n-1} + Der_{n-2}] + (-1)^{n-1}$.
By Lemma \ref{lemm23} we have that $d_{n-1} = Der_{n-1} + Der_{n-2}$, and by Lemma \ref{lemm21}, that $d_{n-1} = D_{n-1} + D_{n-2}$.

\end{proof}

We summarize below the recursions followed by arrangements in $\{D_n\}$ along with the corresponding recursions for derangements $\{Der_n\}$.

\begin{table}[h!]
  \centering
    {
    \hspace*{-1cm}
    \begin{tabular}{|c|c|c|}
     \hline
     {\small Linear arrangements $\{D_n\}$} & {\small Derangements $\{Der_n\}$} & {\small Reference}\\
     \hline
     $D_n=n\left[D_{n-1}+(-1)^{n-1}\right]$ & $Der_n=nDer_{n-1}+(-1)^{n}$ & {\small Coroll. \ref{coro32}}\\
     $D_n=(n-1)\left[D_{n-1}+D_{n-2}\right]+(-1)^{n-1}$ & $Der_n=(n-1)\left[Der_{n-1}+Der_{n-2}\right]$ & {\small Coroll. \ref{coro33}}\\
     \hline
   \end{tabular}}
  \caption{ Recursions valid for linear arrangements $\{D_n\}$ and derangements $\{Der_n\}$.}\label{tab1}
\end{table}

The recursions and lemmas for $\{D_n\}$ have some interesting consequences that we state  below as the following corollaries.

\begin{cor}\label{coro34}
The number $D_n$ is divisible by $n$.
\end{cor}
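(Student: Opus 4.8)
The plan is to obtain the divisibility immediately from the factorization already recorded in Lemma \ref{lemm31}, which states that $D_n = n\, Der_{n-1}$ for every $n \geq 1$. This identity exhibits $D_n$ explicitly as the product of $n$ with the quantity $Der_{n-1}$, so the entire claim reduces to checking that the second factor is an integer. First I would invoke Lemma \ref{lemm31} to write $D_n = n\, Der_{n-1}$, and then I would observe that $Der_{n-1}$ is the cardinality of the finite set $\{Der_{n-1}\}$ of deranged arrangements and is therefore a nonnegative integer. Consequently $D_n$ equals $n$ times an integer, which is exactly the assertion that $n$ divides $D_n$.

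There is essentially no obstacle to overcome here: once Lemma \ref{lemm31} is available, the corollary is a one-line consequence, and the only point worth stating explicitly is the integrality of $Der_{n-1}$. If one preferred to avoid appealing to the combinatorial interpretation, the same conclusion would follow from the closed form in Equation \ref{eq2}, since dividing $D_n = n!\sum_{k=0}^{n-1}(-1)^k/k!$ by $n$ returns precisely $(n-1)!\sum_{k=0}^{n-1}(-1)^k/k! = Der_{n-1}$, whose integrality is guaranteed by the derangement recursion $Der_n = n\,Der_{n-1} + (-1)^n$ together with an easy induction on $n$. Either route makes the divisibility transparent, so I would present the argument from Lemma \ref{lemm31} as the main line and relegate the closed-form verification to a parenthetical remark.
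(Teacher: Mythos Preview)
Your argument is correct and matches the paper's own proof, which simply cites Lemma \ref{lemm31} ($D_n = n\,Der_{n-1}$) as the entire justification. The additional remarks you make about the integrality of $Der_{n-1}$ and the alternative derivation from Equation \ref{eq2} are fine but unnecessary here.
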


\begin{proof}
This is just Lemma \ref{lemm31}, $D_n = n Der_{n-1}$.
\end{proof}

We will see that even more can be said from this equation in the next section.

Corollary \ref{coro34} in turn implies the following:

\begin{prop}
$Der_n - 1$ is divisible by $n$ for $n$ even $(n > 0)$, whereas $Der_n + 1$ is divisible by $n$ for $n$ odd.
\end{prop}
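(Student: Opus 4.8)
The plan is to read the result off directly from the two identities already in hand: Proposition \ref{propo24}, which gives $D_n = Der_n + (-1)^{n-1}$, and Corollary \ref{coro34}, which asserts that $n \mid D_n$. The whole argument reduces to tracking the sign $(-1)^{n-1}$ through the two parity cases and then transferring the divisibility from $D_n$ onto the appropriately shifted derangement number.

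First I would rewrite Proposition \ref{propo24} as $Der_n = D_n - (-1)^{n-1} = D_n + (-1)^n$, isolating $Der_n$. Then I would split on the parity of $n$. When $n$ is even, $(-1)^n = 1$, so $Der_n = D_n + 1$, that is $Der_n - 1 = D_n$. When $n$ is odd, $(-1)^n = -1$, so $Der_n = D_n - 1$, that is $Der_n + 1 = D_n$. In each case the quantity the Proposition claims is a multiple of $n$ turns out to equal $D_n$ exactly.

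Finally I would invoke Corollary \ref{coro34} (equivalently Lemma \ref{lemm31}, $D_n = n\,Der_{n-1}$), which states that $D_n$ is a multiple of $n$. Combining this with the two case identities immediately yields $n \mid (Der_n - 1)$ for $n$ even and $n \mid (Der_n + 1)$ for $n$ odd, which is precisely the statement; the hypothesis $n > 0$ in the even case merely discards the vacuous $n = 0$.

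Since every ingredient has already been proved, there is no genuine obstacle here: this is an immediate corollary rather than a standalone argument. The only point requiring care is the sign bookkeeping, namely keeping straight that $(-1)^n$ produces the $+1$ shift in the even case (giving $Der_n - 1 = D_n$) and the $-1$ shift in the odd case (giving $Der_n + 1 = D_n$), so that the ``$-1$'' is correctly associated with even $n$ and the ``$+1$'' with odd $n$.
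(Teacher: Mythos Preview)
Your proof is correct and follows essentially the same approach as the paper: both rewrite Proposition~\ref{propo24} as $Der_n = D_n + (-1)^n$ and then invoke Corollary~\ref{coro34} (that $n \mid D_n$) to conclude. The only difference is cosmetic---you spell out the two parity cases explicitly, whereas the paper leaves that step implicit.
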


\begin{proof}
Immediate from Proposition \ref{propo24} and Corollary \ref{coro34}. By adding $(-1)^n$ to both sides of Proposition \ref{propo24} we have that $Der_n = D_n + (-1)^n$, and the fact that $n$ divides $D_n$ (Corollary \ref{coro34}) gives the result.
\end{proof}

We will get more divisibility properties from the following lemma:

\begin{lem}\label{lemm36}
The derangement numbers $Der_n$ can be expressed in terms of the number of linear arrangements of the set of integers $\{1,2, \ldots, n\}$ that avoid the patterns $12, 23, \ldots, (n-1)n$, as $Der_n = (n-1)d_{n-1}$, $n \geq 2$.
\end{lem}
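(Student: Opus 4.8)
The plan is to derive this directly by combining Lemma \ref{lemm23} with the classical derangement recursion, since the two fit together almost immediately. First I would apply Lemma \ref{lemm23} with the index shifted down by one: since that lemma gives $d_m = Der_m + Der_{m-1}$ for all $m \geq 1$, taking $m = n-1$ (valid precisely when $n \geq 2$) yields
\[
d_{n-1} = Der_{n-1} + Der_{n-2}.
\]
Multiplying both sides by $(n-1)$ then gives $(n-1)d_{n-1} = (n-1)\left[Der_{n-1} + Der_{n-2}\right]$.

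The second step is simply to recognize the right-hand side. The well-known three-term recursion for the derangement numbers, $Der_n = (n-1)\left[Der_{n-1} + Der_{n-2}\right]$ for $n \geq 2$ (the same recursion already invoked in Corollary \ref{coro33}), shows that $(n-1)\left[Der_{n-1} + Der_{n-2}\right]$ is nothing other than $Der_n$. Chaining the two equalities then gives $Der_n = (n-1)d_{n-1}$, which is the claim. The range $n \geq 2$ is exactly where both Lemma \ref{lemm23} (at index $n-1 \geq 1$) and the derangement recursion are simultaneously valid, so the hypothesis of the statement is met with no further case-checking.

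I expect no real obstacle here: the whole argument is a one-line substitution once the index of Lemma \ref{lemm23} is shifted correctly, and the only point needing a moment's care is confirming the range of validity. As a sanity check I would verify the first few values against the sequences already in the paper, e.g. $Der_4 = 9 = 3\cdot 3 = (4-1)\,d_3$ and $Der_5 = 44 = 4\cdot 11 = (5-1)\,d_4$, which both hold. Alternatively, one could reach the same identity through Lemma \ref{lemm31} by writing $(n-1)d_{n-1} = (n-1)Der_{n-1} + (n-1)Der_{n-2} = (n-1)Der_{n-1} + D_{n-1}$ and simplifying, but the route via Lemma \ref{lemm23} and the standard recursion is the most transparent and is the one I would present.
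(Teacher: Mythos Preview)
Your argument is correct and is essentially identical to the paper's own proof, which likewise substitutes the relation $d_{n-1} = Der_{n-1} + Der_{n-2}$ from Lemma \ref{lemm23} into the standard recursion $Der_n = (n-1)[Der_{n-1} + Der_{n-2}]$. No changes are needed.
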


\begin{proof}
By substituting the counting relation $d_{n-1} = Der_{n-1} + Der_{n-2}$ \mbox{(Lemma \ref{lemm23})} into the recursion $Der_n  = (n-1)[Der_{n-1} + Der_{n-2}]$.
\end{proof}

From this we get the following.

\begin{cor}\label{coro37}
The number $Der_n$ is divisible by $n-1$, $n \geq 2$.
\end{cor}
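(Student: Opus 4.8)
The plan is to read the divisibility straight off Lemma \ref{lemm36}, which has just been established. That lemma gives the identity $Der_n = (n-1)d_{n-1}$ for $n \geq 2$, so the entire substance of the corollary reduces to the remark that the cofactor $d_{n-1}$ is an integer. No further combinatorial machinery is required.

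First I would recall that $d_{n-1}$ is by definition the cardinality of the set $\{d_{n-1}\}$ of linear arrangements of $\{1,2,\ldots,n-1\}$ avoiding the patterns $12, 23, \ldots, (n-2)(n-1)$. As the count of a finite set, $d_{n-1}$ is a nonnegative integer. Lemma \ref{lemm36} therefore exhibits $Der_n$ as the product of the integer $n-1$ with the integer $d_{n-1}$, so $n-1$ divides $Der_n$ for every $n \geq 2$, which is precisely the assertion of the corollary.

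Since the required factorization is handed to us by the preceding lemma, there is no genuine obstacle here; the only point one might pause on is whether the equality of Lemma \ref{lemm36} is an identity of integers rather than merely of closed-form expressions, but this is immediate because both sides are literal cardinalities. If an independent check were wanted, I would confirm the first few values against the derangement sequence, e.g. $Der_3 = 2 = 2\cdot d_2 = 2\cdot 1$ and $Der_4 = 9 = 3\cdot d_3 = 3\cdot 3$, each of which displays the claimed divisibility explicitly.
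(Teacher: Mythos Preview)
Your argument is correct and is precisely the paper's own proof: the corollary is obtained by reading the divisibility directly from Lemma~\ref{lemm36}, $Der_n=(n-1)d_{n-1}$, with the (obvious) observation that $d_{n-1}$ is an integer. The numerical checks you add are fine but unnecessary.
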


\begin{proof}
This is just a rephrasing of Lemma \ref{lemm36}.
\end{proof}

We will see that more can be said from these divisibility properties in the next section.

\section{Equidistribution properties for linear arrangements and derangements}

Now that we have found recursions for linear arrangements in $\{D_n\}$, we move on to discuss distribution properties of these arrangements. We first state two brief definitions.

\begin{defn}
A \textit{class} of linear arrangements is a subset of arrangements of $\{D_n\}$, $\{d_n\}$ or $\{d_{n1}\}$ that start with the same digit.
\end{defn}

\begin{defn}
We say that a set of linear arrangements is \textit{equidistributed} if the set can be partitioned into nonempty classes having the same cardinality.
\end{defn}

\vspace{-1em}
Now we have the following proposition and corollaries:

\begin{prop}\label{propo43}
The linear arrangements that avoid the patterns\linebreak $12, 23, \ldots, (n-1)n, n1$, \textit{ie}. $\{D_n\}$ are equidistributed for all $n \geq 3$.
\end{prop}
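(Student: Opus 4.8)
The plan is to exhibit an explicit bijection that permutes the classes cyclically, so that all classes necessarily share the same cardinality. Recall that a class consists of those arrangements in $\{D_n\}$ beginning with a fixed digit, so there are at most $n$ classes, indexed by the starting digit $j \in \{1, \ldots, n\}$. Since Lemma \ref{lemm31} gives $D_n = n\,Der_{n-1}$, the target size of each class ought to be $Der_{n-1}$, a positive integer for $n \geq 3$; this already hints that the $n$ classes split $\{D_n\}$ evenly, and the goal is to confirm this structurally rather than by separate counting.

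The key observation I would exploit is that the forbidden set $\{12, 23, \ldots, (n-1)n, n1\}$ is exactly the collection of cyclically consecutive ordered pairs $(k,k+1)$ with indices read modulo $n$, so that $n+1 \equiv 1$. This set is invariant under the relabeling $\tau(x) = x+1$ taken modulo $n$ inside $\{1, \ldots, n\}$. First I would define the map $\Phi \colon \{D_n\} \to S_n$ sending an arrangement $a_1 a_2 \cdots a_n$ to $\tau(a_1)\tau(a_2)\cdots\tau(a_n)$; since $\tau$ is a bijection of the digit set, $\Phi$ returns a genuine rearrangement of $\{1,\ldots,n\}$. The central step is to check that $\Phi$ carries $\{D_n\}$ into itself: an adjacency $a_i a_{i+1}$ is forbidden precisely when $a_{i+1} \equiv a_i + 1 \pmod n$, and this congruence holds for $(a_i, a_{i+1})$ if and only if it holds for the relabeled pair $(\tau(a_i), \tau(a_{i+1}))$. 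Hence $\Phi$ neither creates nor destroys forbidden adjacencies, and because $\tau$ is invertible, $\Phi$ restricts to a bijection of $\{D_n\}$.

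Next I would note that $\Phi$ acts on the leading digit exactly by $\tau$, so it sends the class starting with $j$ onto the class starting with $\tau(j) = j+1 \pmod n$. As $\tau$ is a single $n$-cycle on the digit set, the induced action of $\Phi$ on the collection of classes is itself a single $n$-cycle; all classes are therefore carried bijectively onto one another and have a common cardinality. To meet the partition requirement, I would verify nonemptiness: by Lemma \ref{lemm31}, $D_n = n\,Der_{n-1} > 0$ for $n \geq 3$ (since $Der_{n-1} \geq Der_2 = 1$), so at least one class is nonempty, and applying powers of $\Phi$ shows every class is nonempty. Together these produce a partition of $\{D_n\}$ into $n$ nonempty classes of equal size $Der_{n-1}$, which is precisely equidistribution.

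The main obstacle is the cyclic-invariance check in the second paragraph: one must confirm that the forbidden set is genuinely the full cyclic family \emph{including} the wrap-around pattern $n1$, because it is exactly the presence of $n1$ that makes $\tau$ preserve $\{D_n\}$ rather than only the larger set $\{d_n\}$. I also expect a little care at the boundary case $n = 3$, both to guarantee the classes are nonempty and to ensure the $n$-cycle action on classes is genuine rather than degenerate.
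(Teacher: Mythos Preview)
Your argument is correct and in fact more complete than the paper's. The paper simply invokes Lemma~\ref{lemm31}, $D_n = n\,Der_{n-1}$, and asserts that this factorisation ``may be read as'' a partition into $n$ classes of size $Der_{n-1}$; it then checks nonemptiness by noting that each class admits an arrangement beginning with one of the valid two-digit prefixes $1n, 21, 32, \ldots, n(n-1)$. Strictly speaking, the divisibility $n \mid D_n$ together with the existence of $n$ classes does not by itself force equal class sizes, so the paper is implicitly relying on exactly the cyclic symmetry that you make explicit.

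Your approach supplies that missing symmetry directly: the relabeling $\tau(x) = x+1 \pmod n$ permutes the cyclically closed forbidden set $\{12,23,\ldots,(n-1)n,n1\}$ and hence induces a bijection $\Phi$ of $\{D_n\}$ that rotates the classes in a single $n$-cycle. This yields equal class sizes without any counting, and your nonemptiness argument (one class nonempty since $D_n>0$, hence all nonempty via $\Phi$) is cleaner than exhibiting explicit prefixes. The trade-off is that the paper's version makes the numerical content $Der_{n-1}$ visible immediately, whereas you recover it only at the end by dividing $D_n$ by $n$; but structurally your route is the more robust one, and it is precisely the step the paper's proof glosses over.
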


\begin{proof}
From Lemma \ref{lemm31} we have that $D_n = n Der_{n-1}$.  In fact, this equation may be read as $\{D_n\}$ consisting of $n$ nonempty classes starting with digits $1, 2, \ldots, n$, each class consisting of $Der_{n-1}$ members.  The classes in $\{D_n\}$ are not empty since we can start each of the $n$ classes with the patterns\linebreak $1n, 21, 32, \ldots, n(n-1)$, which are valid in $\{D_n\}$.
\end{proof}

The fact that $\{D_n\}$ can be partitioned into $n$ classes of size $Der_{n-1}$ is important enough to record it as a corollary:

\begin{cor}
The number of members in each class of $\{D_n\}$ is exactly $Der_{n-1}$.
\end{cor}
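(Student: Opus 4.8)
The plan is to read this statement as a direct numerical consequence of the two results just established. Proposition \ref{propo43} already shows that $\{D_n\}$ partitions into exactly $n$ nonempty classes, one for each possible leading digit $1, 2, \ldots, n$, and its equidistribution assertion guarantees that these classes all share a common cardinality, say $c$. First I would record that, since the classes are pairwise disjoint (an arrangement has a single first digit) and together exhaust $\{D_n\}$, summing their sizes gives $D_n = nc$. Then I would invoke Lemma \ref{lemm31}, which supplies the independent evaluation $D_n = n\,Der_{n-1}$. Equating $nc = n\,Der_{n-1}$ and cancelling the factor $n \geq 3 > 0$ yields $c = Der_{n-1}$, which is exactly the claim.

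An alternative, more self-contained route avoids quoting equidistribution and instead exhibits a single class directly. Fixing the leading digit, say the class of arrangements in $\{D_n\}$ beginning with $1$, one would construct a map from this class to the set $\{Der_{n-1}\}$ of derangements on $n-1$ symbols and verify that it is a bijection; the value $Der_{n-1}$ would then emerge for that class, and the symmetry underlying Proposition \ref{propo43} would transfer the count to every other class. The delicate point in this approach is checking that the map and its inverse respect the full cyclic family of forbidden patterns $12, 23, \ldots, (n-1)n, n1$, rather than merely the linear family, so that images genuinely land in $\{D_n\}$ and in the derangement set.

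I expect no real obstacle along the first route, since it is purely arithmetic once Proposition \ref{propo43} and Lemma \ref{lemm31} are in hand; the corollary is essentially a bookkeeping restatement of information already extracted inside the proof of the proposition. The only point meriting care is the standing hypothesis $n \geq 3$, inherited from Proposition \ref{propo43}, which guarantees that the classes are nonempty and hence that the common cardinality $c$ is well defined before one divides by $n$.
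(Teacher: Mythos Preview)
Your first route is correct and is essentially the paper's own argument: the proof of Proposition~\ref{propo43} already reads $D_n = n\,Der_{n-1}$ (Lemma~\ref{lemm31}) as a partition into $n$ nonempty classes of size $Der_{n-1}$, and the corollary is recorded with no further proof. Your write-up is, if anything, slightly more explicit in separating the equidistribution step from the count $D_n = nc$ before dividing; the alternative bijective route you sketch is not used in the paper.
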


\vspace{-1em}
Hence we see that there is yet another object being counted by the derangement numbers, namely the cardinality or number of members in each class of $\{D_n\}$.

As an example of the Proposition and Corollary, we have $D_4 = 8 = 4\cdot 2$, hence $\{D_4\}$ consists of 4 classes of linear arrangements, each class consisting of 2 members.  Similarly, $D_5 = 45 = 5\cdot 9$, hence $\{D_5\}$ consists of 5 classes of arrangements with 9 members in each class.

Now we turn our attention to deranged arrangements, or derangements $\{Der_n\}$.

We have the following proposition:

\begin{prop}\label{propo45}
The deranged linear arrangements $\{Der_n\}$ are equidistributed for all $n \geq 2$.
\end{prop}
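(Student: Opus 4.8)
The plan is to follow the template of Proposition~\ref{propo43}: partition $\{Der_n\}$ into classes according to the first digit, and then show that these classes are nonempty and all of the same size. The first observation is that a derangement $\pi$ never fixes position~$1$, so its first digit $\pi(1)$ lies in $\{2,3,\ldots,n\}$; consequently $\{Der_n\}$ is partitioned by at most $n-1$ classes, one for each admissible starting digit $k\in\{2,\ldots,n\}$. It then remains to prove that these classes all have equal cardinality and that none is empty.

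The crux of the argument is the equality of the class sizes, and I would establish it by an explicit bijection between the class beginning with $j$ and the class beginning with $k$, for any distinct $j,k\in\{2,\ldots,n\}$. Let $\tau=(j\ k)$ be the transposition interchanging the two values $j$ and $k$; since $j,k\neq 1$, it fixes the digit~$1$. The map $\pi\mapsto\tau\pi\tau$ then sends the class starting with $j$ onto the class starting with $k$: conjugation preserves cycle type and hence carries derangements to derangements, and because $\tau(1)=1$ we obtain $(\tau\pi\tau)(1)=\tau(\pi(1))=\tau(j)=k$. As $\tau$ is an involution, this map is its own inverse and is therefore a bijection. I expect this to be the only step requiring real insight, though the difficulty is merely in choosing the correct symmetry; once the conjugation is written down the verification is immediate. (Alternatively one can count directly: for fixed $\pi(1)=k$, either $\pi(k)=1$, giving $Der_{n-2}$ completions, or $\pi(k)\neq 1$, giving $Der_{n-1}$ completions by the standard derangement bijection, so every class has $Der_{n-1}+Der_{n-2}$ members independently of $k$.)

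To conclude, the $n-1$ classes are equal in size and their disjoint union is all of $\{Der_n\}$, which is nonempty for $n\geq 2$ because $Der_n\geq 1$; hence the common class size equals $Der_n/(n-1)$, a positive integer by Corollary~\ref{coro37}, so every class is in fact nonempty and $\{Der_n\}$ is equidistributed. Finally, invoking Lemma~\ref{lemm36} in the form $Der_n=(n-1)d_{n-1}$ identifies the common size as $d_{n-1}=Der_{n-1}+Der_{n-2}$ (Lemma~\ref{lemm23}), exactly parallel to the size-$Der_{n-1}$ classes obtained for $\{D_n\}$ in Proposition~\ref{propo43}; indeed the divisibility $(n-1)\mid Der_n$ recorded in Corollary~\ref{coro37} is precisely the arithmetic shadow of this equidistribution.
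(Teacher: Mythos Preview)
Your proof is correct, and in fact more complete than the paper's. The paper's argument is much terser: it simply observes that the class starting with~$1$ is empty (since a derangement cannot fix~$1$), so there are $n-1$ nonempty classes, and then invokes Corollary~\ref{coro37} ($n-1 \mid Der_n$) to conclude. Strictly speaking, divisibility alone does not force the classes to have equal size, so the paper is tacitly relying on an unstated symmetry; your conjugation bijection $\pi \mapsto \tau\pi\tau$ with $\tau=(j\ k)$ and $j,k\neq 1$ supplies exactly that missing step and makes the argument self-contained. Your alternative direct count (splitting on whether $\pi(k)=1$) is also correct and recovers the class size $Der_{n-1}+Der_{n-2}=d_{n-1}$ immediately, whereas the paper only records this afterward as a separate corollary via Lemma~\ref{lemm36}. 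In short, the paper leans on the arithmetic identity and leaves the combinatorial equality of the classes implicit; you prove the combinatorial equality first and obtain the arithmetic as a consequence.
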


\begin{proof}
We see that for derangements, the first class (which starts  with 1) is empty, so we only have $n-1$ nonempty classes. Then from Corollary \ref{coro37} we have that $n-1$ divides $Der_n$ and the result follows.
\end{proof}

The fact that $\{Der_n\}$ can be partitioned into $n-1$ classes of size $d_{n-1}$ is important enough to record it as a corollary:

\begin{cor}
The number of members in each nonempty class of $\{Der_n\}$ is exactly $d_{n-1}$.
\end{cor}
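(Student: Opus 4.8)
The plan is to read the class size directly off the two results already in hand: the equidistribution of $\{Der_n\}$ into $n-1$ nonempty equal classes (Proposition \ref{propo45}) together with the count $Der_n = (n-1)d_{n-1}$ (Lemma \ref{lemm36}). This mirrors exactly the argument for the preceding corollary on $\{D_n\}$, where the size $Der_{n-1}$ of each class was obtained from $D_n = n\,Der_{n-1}$ divided by the $n$ classes; here we simply divide $Der_n$ by the $n-1$ classes instead.

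First I would recall that a deranged arrangement cannot place the digit $1$ in the first position, so the class beginning with $1$ is empty, and the only candidate nonempty classes are those beginning with $2, 3, \ldots, n$. By the proof of Proposition \ref{propo45} these $n-1$ classes are all nonempty and, by the equidistribution established there, all of the same cardinality. Writing $c$ for this common cardinality and partitioning $\{Der_n\}$ according to the first digit, I obtain $Der_n = (n-1)c$.

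Next I would substitute the expression $Der_n = (n-1)d_{n-1}$ from Lemma \ref{lemm36} and divide both sides by $n-1$, which is legitimate for $n \geq 2$. This yields $c = d_{n-1}$, giving the stated value for the number of members in each nonempty class.

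I do not expect a genuine obstacle, since the substantive content has already been absorbed into Proposition \ref{propo45} and Lemma \ref{lemm36}; the corollary is the arithmetic observation that the equal class size forced by equidistribution takes the specific value $d_{n-1}$. The only point deserving a moment's care is confirming that each of the classes beginning with $2, \ldots, n$ is truly nonempty, so that there are exactly $n-1$ parts and the division by $n-1$ is valid. This is precisely the nonemptiness already invoked in Proposition \ref{propo45}, where one can exhibit an explicit derangement in each such class, so no new work is required.
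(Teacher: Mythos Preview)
Your proposal is correct and follows essentially the same route as the paper: the paper's own proof simply cites Lemma \ref{lemm36} (giving $Der_n=(n-1)d_{n-1}$) and Corollary \ref{coro37} (the divisibility by $n-1$), which together with the $n-1$ nonempty classes from Proposition \ref{propo45} yield the class size $d_{n-1}$ just as you argue. Your write-up is slightly more explicit about the division step and the nonemptiness check, but the underlying argument is the same.
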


\begin{proof}
Lemma \ref{lemm36} and Corollary \ref{coro37} give these counting relations.
\end{proof}

As an example of the Corollary, we have $Der_4 = 9 = 3\cdot 3$, hence $\{Der_4\}$ consists of 3 classes of arrangements (starting with 2, 3 and 4), each class consisting of 3 members. Similarly, $Der_5 = 44 = 4\cdot 11$, hence $\{Der_5\}$ consists of 4 classes of arrangements with 11 members in each class.

We also have the following:

\begin{lem}\label{lemm47}
For $n \geq 2$, the class in $\{d_{n1}\}$ that starts with 1 (\textit{ie}. the first class) is empty.
\end{lem}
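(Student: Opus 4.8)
The plan is to argue directly from the defining property of $\{d_{n1}\}$: every arrangement in this set not only belongs to $\{d_n\}$ but actually \emph{realizes} the pattern $n1$. The key observation I would make is that the pattern $n1$ occurring in a linear arrangement means the digit $n$ sits immediately to the left of the digit $1$; in particular, the digit $1$ must have a left neighbour in the arrangement.

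First I would take an arbitrary arrangement lying in the class of $\{d_{n1}\}$ that starts with the digit $1$, and derive a contradiction. If $1$ occupies the leftmost position, then $1$ has no left neighbour at all, so no digit — in particular not $n$ — can occur immediately before it. Since the arrangements under consideration are permutations of $\{1, 2, \ldots, n\}$, the symbol $1$ appears exactly once, so this is the only location where any subpattern ending in $1$ could sit. Hence the pattern $n1$ is absent, contradicting membership in $\{d_{n1}\}$. Therefore no arrangement in $\{d_{n1}\}$ can begin with $1$, and the first class is empty.

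As a cross-check, I could instead invoke the bijection $\varphi\colon \{D_{n-1}\}\rightarrow\{d_{n1}\}$ from Lemma \ref{lemm21}, which produces every element of $\{d_{n1}\}$ by inserting the digit $n$ directly in front of the $1$. Under this construction the symbol $1$ is always immediately preceded by $n$, so it can never be the leading symbol. This yields the same conclusion and confirms that the statement is consistent with the bijective picture established earlier.

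I do not expect a genuine obstacle here: the lemma reduces to the single remark that a leftmost $1$ has no predecessor. The only point needing a word of care is the hypothesis $n \geq 2$, which guarantees $n \neq 1$, so that the pattern $n1$ involves two distinct symbols and the argument is not vacuous.
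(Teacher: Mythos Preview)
Your argument is correct. Your primary route is more elementary than the paper's: you read the claim straight off the definition of $\{d_{n1}\}$, observing that an arrangement containing the block $n1$ cannot place $1$ in the leftmost slot. The paper instead argues via the bijection $\varphi\colon\{D_{n-1}\}\to\{d_{n1}\}$ from Lemma~\ref{lemm21}, tracking which starting classes in $\{D_{n-1}\}$ land in which starting classes in $\{d_{n1}\}$ and noting that nothing lands in the class starting with $1$; this is exactly what you give as your cross-check. Your direct argument is shorter and self-contained, while the paper's bijective phrasing has the side benefit of also describing where each of the other classes goes, information that is reused in the proof of the subsequent non-equidistribution result for $\{d_n\}$.
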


\begin{proof}
When we form the arrangements in $\{d_{n1}\}$ from $\{D_{n-1}\}$ by the\linebreak $n1$-mapping from Lemma \ref{lemm21}, we know that the class in $\{D_{n-1}\}$ that starts with 1 will be mapped into the class that starts with $n1$ in $\{d_{n1}\}$, while the other classes $i$, $i = 2, 3, \ldots, n-1$ get mapped to the same class. This means that no class in $\{D_{n-1}\}$ gets mapped to the first class in $\{d_{n1}\}$, so this class will be empty.
\end{proof}

\begin{lem}\label{lemm48}
For $n \geq 2$, the number of members in each nonempty class of $\{d_{n1}\}$ is   exactly $Der_{n-2}$.
\end{lem}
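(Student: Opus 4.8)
The plan is to exploit the bijection $\varphi:\{D_{n-1}\}\to\{d_{n1}\}$ from Lemma \ref{lemm21} (insert $n$ in front of the digit $1$) together with the equidistribution of $\{D_{n-1}\}$ already established in Proposition \ref{propo43}. Since $\varphi$ is a bijection, the total count is $|\{d_{n1}\}|=D_{n-1}$, and by Lemma \ref{lemm31} this equals $(n-1)Der_{n-2}$; so it remains only to show that the $n-1$ nonempty classes of $\{d_{n1}\}$ (all but the first, which is empty by Lemma \ref{lemm47}) share a common size, which must then be $Der_{n-2}$.

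First I would track how $\varphi$ acts on leading digits. An arrangement in $\{D_{n-1}\}$ uses the digits $1,2,\ldots,n-1$ and contains a unique $1$. If it starts with some digit $i\in\{2,\ldots,n-1\}$, then $1$ lies in the interior, so inserting $n$ immediately before it leaves the leading digit $i$ unchanged; thus $\varphi$ carries the class of $\{D_{n-1}\}$ starting with $i$ into the class of $\{d_{n1}\}$ starting with $i$. If instead it starts with $1$, prepending $n$ makes it start with $n1$, so $\varphi$ carries the class starting with $1$ into the class of $\{d_{n1}\}$ starting with $n$. Hence the $n-1$ classes of $\{D_{n-1}\}$ are sent bijectively onto the $n-1$ classes of $\{d_{n1}\}$ that start with $2,3,\ldots,n$, and none maps onto the class starting with $1$ (recovering Lemma \ref{lemm47}).

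Next I would invoke Proposition \ref{propo43}: for $n-1\geq 3$ the set $\{D_{n-1}\}$ splits into $n-1$ classes each of size $Der_{(n-1)-1}=Der_{n-2}$. Because $\varphi$ restricts to a bijection between each class of $\{D_{n-1}\}$ and the corresponding class of $\{d_{n1}\}$ identified above, every nonempty class of $\{d_{n1}\}$ also has exactly $Der_{n-2}$ members, which is the claim. The small cases $n=2,3$, where Proposition \ref{propo43} does not apply, I would dispatch by hand: for $n=3$ we have $|\{d_{31}\}|=D_2=0$, so $\{d_{31}\}$ is empty and the statement holds vacuously (consistent with $Der_1=0$), while for $n=2$ the set $\{d_{21}\}=\{21\}$ has a single nonempty class of size $1=Der_0$.

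I expect the main obstacle to be the bookkeeping in the second paragraph rather than any substantive computation: one must verify carefully that inserting $n$ before the \emph{interior} occurrence of $1$ genuinely fixes the leading digit, and that the leading-$1$ class of $\{D_{n-1}\}$ is the unique preimage of the class of $\{d_{n1}\}$ starting with $n$. Once the class-to-class correspondence is pinned down, equidistribution of $\{d_{n1}\}$ is simply inherited from that of $\{D_{n-1}\}$ through the bijection.
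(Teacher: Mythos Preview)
Your argument is correct. Both you and the paper reach the numerical identity $d_{n1}=D_{n-1}=(n-1)\,Der_{n-2}$ via Corollary~\ref{coro22} and Lemma~\ref{lemm31}; the paper's proof of Lemma~\ref{lemm48} in fact stops there, recording only that total count. You go further and actually justify the \emph{per-class} statement: you track how $\varphi$ carries the $n-1$ classes of $\{D_{n-1}\}$ bijectively onto the classes of $\{d_{n1}\}$ starting with $2,\ldots,n$ (the paper places this observation in the proof of Lemma~\ref{lemm47}), and then invoke Proposition~\ref{propo43} to import the uniform class size $Der_{n-2}$. In the paper this uniformity is left implicit until Proposition~4.9, where the total is simply divided by the number of nonempty classes; your version makes the equal-size conclusion follow from an explicit class-to-class bijection rather than from a division, and it cleanly handles the boundary cases $n=2,3$ that Proposition~\ref{propo43} does not cover.
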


\begin{proof}
By Lemma \ref{lemm31}, we have that $D_n = n Der_{n-1}$. By Corollary \ref{coro22}, since $d_{n1} = D_{n-1}$, we have that $d_{n1} = D_{n-1}  = (n-1) Der_{n-2}$.
\end{proof}

\begin{prop}
The linear arrangements in $\{d_n\}$ that contain the pattern $n1$ \textit{ie}. $\{d_{n1}\}$ are equidistributed for $n \geq 2$.
\end{prop}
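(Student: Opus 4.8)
The plan is to read off equidistribution directly from the two preceding lemmas, which together pin down both the empty class and the common size of the nonempty ones. By Lemma \ref{lemm47}, the first class of $\{d_{n1}\}$, consisting of arrangements that begin with the digit $1$, is empty; so the class indexed by leading digit $1$ contributes nothing to any partition by first digit. By Lemma \ref{lemm48}, every nonempty class of $\{d_{n1}\}$ has exactly $Der_{n-2}$ members. Since the classes partition $\{d_{n1}\}$ by first digit, and all the nonempty ones share the cardinality $Der_{n-2}$, the definition of equidistribution is met once I confirm that the nonempty classes are precisely those with leading digits $2, 3, \ldots, n$.

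To identify the nonempty classes, I would track where each class of $\{D_{n-1}\}$ lands under the $n1$-map $\varphi$ of Lemma \ref{lemm21}, exactly as in the proof of Lemma \ref{lemm47}. The class of $\{D_{n-1}\}$ starting with $1$ is sent to the class of $\{d_{n1}\}$ that begins with $n1$, hence to the class with leading digit $n$; each class starting with $i$, for $i = 2, \ldots, n-1$, is sent to the class with the same leading digit $i$. Thus the image classes are exactly those with leading digits $2, 3, \ldots, n$, giving $n-1$ classes, while the class with leading digit $1$ receives nothing, consistent with Lemma \ref{lemm47}. A quick tally confirms the bookkeeping: there are $n-1$ classes each of size $Der_{n-2}$, for a total of $(n-1)Der_{n-2} = D_{n-1} = d_{n1}$ by Corollary \ref{coro22} and Lemma \ref{lemm31}, which matches the full cardinality of $\{d_{n1}\}$.

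The one point needing care, and the only real obstacle, is the nonemptiness requirement built into the definition: the partition must consist of \emph{nonempty} classes of equal size. For $n \geq 4$ we have $Der_{n-2} \geq 1$, so each of the $n-1$ classes with leading digit in $\{2, \ldots, n\}$ is genuinely nonempty and the argument closes. The small cases should be inspected separately: for $n = 2$ the set $\{d_{n1}\}$ is the single arrangement $21$, one nonempty class of size $Der_0 = 1$, trivially equidistributed; for $n = 3$ one has $Der_1 = 0$ and $\{d_{n1}\}$ is empty, so the statement holds only in the degenerate sense that there are no nonempty classes to compare. I would therefore conclude as follows: for $n \geq 2$ the set $\{d_{n1}\}$ partitions into the $n-1$ classes with leading digits $2, 3, \ldots, n$, each of common size $Der_{n-2}$, all of which are nonempty once $n \geq 4$, establishing equidistribution.
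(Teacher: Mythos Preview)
Your proof is correct and follows essentially the same route as the paper's: invoke Lemma~\ref{lemm47} to rule out the class beginning with $1$, invoke Lemma~\ref{lemm48} to give each nonempty class the common size $Der_{n-2}$, and conclude. If anything you are more careful than the paper, since you explicitly identify the nonempty classes via the $n1$-map and flag the degenerate case $n=3$ where $\{d_{n1}\}$ is empty.
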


\begin{proof}
Lemma \ref{lemm47} shows that there are exactly $n-1$ nonempty classes in $\{d_{n1}\}$. By Lemma \ref{lemm48}, we have that $d_{n1} = (n-1) Der_{n-2}$. Hence $\{d_{n1}\}$ consists of $n-1$ nonempty classes, each class consisting of $Der_{n-2}$ arrangements.
\end{proof}

We record explicitly this divisibility property:

\begin{cor}
For $n \geq 2$, $n-1$ divides $d_{n1}$.
\end{cor}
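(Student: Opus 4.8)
The plan is to read off the divisibility directly from the equidistribution of $\{d_{n1}\}$ established in the immediately preceding proposition. That proposition combines Lemma \ref{lemm47} (the class starting with $1$ is empty, so there remain exactly $n-1$ nonempty classes) with Lemma \ref{lemm48} (each nonempty class has exactly $Der_{n-2}$ members) to yield, upon summing the class sizes, the product formula $d_{n1} = (n-1)\,Der_{n-2}$. First I would simply invoke this identity.

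With the formula in hand the conclusion is immediate: $Der_{n-2}$ is a nonnegative integer, being a count of derangements, so the product $(n-1)\,Der_{n-2}$ is by definition a multiple of $n-1$, which is exactly the claim. Alternatively, one can reach the same identity without passing through the class-counting picture, by chaining $d_{n1} = D_{n-1}$ from Corollary \ref{coro22} with $D_{n-1} = (n-1)\,Der_{n-2}$, the latter being Lemma \ref{lemm31} with the index shifted down by one.

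There is no substantive obstacle here; the statement is pure bookkeeping over results already proved, and the ``hard part'' is really just choosing which of the two equivalent derivations to present. The only point worth confirming is the range of validity: both Corollary \ref{coro22} and Lemma \ref{lemm31} hold for the relevant indices, and with the convention $Der_0 = 1$ the identity $d_{n1} = (n-1)\,Der_{n-2}$ is valid for all $n \geq 2$, matching the stated hypothesis.
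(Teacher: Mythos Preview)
Your proposal is correct and matches the paper's approach exactly: the paper states this corollary immediately after the equidistribution proposition (built on Lemmas \ref{lemm47} and \ref{lemm48}) with no additional proof, treating the divisibility as a direct restatement of $d_{n1}=(n-1)\,Der_{n-2}$. Your alternative route via Corollary \ref{coro22} and Lemma \ref{lemm31} is in fact precisely how Lemma \ref{lemm48} itself is proved, so both derivations coincide.
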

%

\vspace{-1em}An example of this relationship is given by the arrangements in $\{D_n\}$ for $n = 5$ that allow the pattern 51. The class that starts with 1 is empty, and the remaining  $n-1 = 4$ classes have $Der_{n-2} = Der_3 = 2$ members each.

Note also that for $n = 2$, we have that $\{d_2\} = \{d_{21}\}$ since the only arrangement in this case is $\{21\}$, whereas for $n = 3$, we have that $\{D_3\} = \{d_3\}$, hence $\{d_{31}\}$ is empty and there are no arrangements that contain the pattern 31 (see \mbox{Table \ref{tabA2}} in the Appendix).

So far we have seen that the linear arrangements $\{D_n\}$, the deranged\linebreak arrangements $\{Der_n\}$, and the arrangements in $\{d_n\}$ that contain the pattern $n1$ are equidistributed, with the last two of them containing only $n-1$ nonempty classes.

We summarize the different properties of these arrangements in the following table:

\begin{table}[h!]
  \centering
  {
  \begin{tabular}{|c|c|c|c|c|}
    \hline
    \parbox{1.9cm}{\centering {\small Arrangement Type}}& \parbox[c][1.3cm]{1.6cm}{\centering {\small Number of Nonempty Classes}} & \parbox{1.2cm}{\centering {\small Size of Class}} & \parbox{3.5cm}{\centering {\small Relevant Equation}} & \parbox{1.7cm}{\centering {\small Numbered Reference}}\\
    \hline
    $D_n$ & $n$ & $Der_{n-1}$ &$D_n=nDer_{n-1}$ & {\small Lemma \ref{lemm31}}\\
    $Der_n$ & $n-1$ & $d_{n-1}$ &$Der_n=(n-1)d_{n-1}$ & {\small Lemma \ref{lemm36}}\\
    $d_{n1}$ & $n-1$ & $Der_{n-2}$ & $d_{n-1}=(n-1)Der_{n-2}$& {\small Lemma \ref{lemm48}}\\
    \hline
  \end{tabular}}
  \caption{Properties satisfied by $\{D_n\}$, $\{d_{n1}\}$, and the derangements $\{Der_n\}$.}\label{tab2}
\end{table}

We will now see, however, that the linear arrangements in $\{d_n\}$ are not\linebreak equidistributed.

\begin{prop}
The linear arrangements that avoid the patterns\linebreak $12, 23, \ldots, (n-1)n$, \textit{ie}. $\{d_n\}$ are not equidistributed for $n > 3$.
\end{prop}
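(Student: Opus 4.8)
The plan is to exploit the disjoint decomposition $\{d_n\} = \{D_n\} \cup \{d_{n1}\}$ established in Lemma \ref{lemm21}, together with the class structures of the two pieces already computed in this section, and to show that these pieces cannot be reassembled into equal classes. Since both $\{D_n\}$ and $\{d_{n1}\}$ consist of length-$n$ arrangements of the same digits $1, 2, \ldots, n$, grouping by leading digit is compatible with the union: the class of $\{d_n\}$ beginning with a fixed digit $i$ is the disjoint union of the class of $\{D_n\}$ beginning with $i$ and the class of $\{d_{n1}\}$ beginning with $i$, so its cardinality is the sum of the two class sizes.

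First I would record the class sizes of the two pieces. By Proposition \ref{propo43} the set $\{D_n\}$ splits into $n$ nonempty classes each of size $Der_{n-1}$, one for every leading digit $1, 2, \ldots, n$. By Lemma \ref{lemm47} the class of $\{d_{n1}\}$ beginning with $1$ is empty, while by Lemma \ref{lemm48} each of the remaining $n-1$ classes of $\{d_{n1}\}$ has size $Der_{n-2}$. Combining these class by class yields the class sizes of $\{d_n\}$: the class beginning with $1$ has size $Der_{n-1} + 0 = Der_{n-1}$, whereas each class beginning with $i \in \{2, \ldots, n\}$ has size $Der_{n-1} + Der_{n-2}$. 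As a consistency check, summing over all leading digits gives $Der_{n-1} + (n-1)(Der_{n-1} + Der_{n-2}) = n Der_{n-1} + (n-1)Der_{n-2} = D_n + d_{n1} = d_n$, in agreement with Lemma \ref{lemm21}.

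Finally I would observe that for $n > 3$ every one of these $n$ classes is nonempty, since $Der_{n-1} \geq Der_3 = 2 > 0$, so the partition by leading digit coincides with the partition into nonempty classes demanded by the definition of equidistribution. The first class and any later class have equal cardinality if and only if $Der_{n-2} = 0$; but $Der_{n-2} \geq Der_2 = 1$ for all $n \geq 4$, so the class beginning with $1$ is strictly smaller than every other class. Hence the nonempty classes of $\{d_n\}$ cannot all share a common cardinality and $\{d_n\}$ fails to be equidistributed for $n > 3$. (For $n = 3$ one has $Der_{n-2} = Der_1 = 0$, which is exactly why equidistribution survives up to $n = 3$.)

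The proof is short once the class bookkeeping is set up, and the only point genuinely requiring care is the compatibility of the leading-digit decomposition with the disjoint union $\{d_n\} = \{D_n\} \cup \{d_{n1}\}$; I would state this explicitly. Granting it, the entire obstruction to equidistribution is the empty leading-$1$ class of $\{d_{n1}\}$ recorded in Lemma \ref{lemm47}, which deprives the first class of $\{d_n\}$ of the $Der_{n-2}$ extra members that every other class receives.
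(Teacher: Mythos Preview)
Your proof is correct and follows essentially the same route as the paper: decompose $\{d_n\}=\{D_n\}\cup\{d_{n1}\}$ via Lemma~\ref{lemm21}, use Proposition~\ref{propo43} for the uniform class sizes in $\{D_n\}$, and invoke Lemma~\ref{lemm47} to see that the leading-$1$ class of $\{d_{n1}\}$ is empty while the others are not. The only difference is that you push the bookkeeping further, using Lemma~\ref{lemm48} to compute the exact class sizes $Der_{n-1}$ versus $Der_{n-1}+Der_{n-2}$, whereas the paper is content to note that classes $2,\ldots,n-1$ are ``increased by at least one arrangement''; both arguments pivot on the same obstruction.
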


\begin{proof}
We know from Lemma \ref{lemm21} that $\{d_n\} = \{D_n\}\cup \{d_{n1}\}$ and from Proposition \ref{propo43} that $\{D_n\}$ is equidistributed, with no class empty. When we form arrangements in $\{d_{n1}\}$ from $\{D_{n-1}\}$ by the $n1$-mapping, Lemma \ref{lemm47} shows that no class in $\{D_{n-1}\}$ gets mapped to the first class in $\{d_{n1}\}$, while the other classes $i$, $i = 2, 3,\ldots, n-1$ in $\{D_{n-1}\}$ get mapped to the same class in $\{d_{n1}\}$, hence these classes get increased by at least one arrangement. This proves the Proposition.
\end{proof}

\section{Results for Permutations}

Now we turn our attention to permutations themselves. Note that since there is a natural correspondence between linear arrangements and permutations in one-line notation, we want to examine how Propositions \ref{propo43} and \ref{propo45} behave in terms of permutations. To do so we need a preliminary result.

\begin{prop}\label{propo51}
For $n \geq 2$, there is a bijection between linear arrangements in $\{d_n\}$ and permutations in one-line notation.
\end{prop}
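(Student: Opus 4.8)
The plan is to realize the stated correspondence as the map $\Phi$ that reads a linear arrangement $a_1 a_2 \cdots a_n \in \{d_n\}$ as the one-line notation of a permutation $\pi$, setting $\pi(i) = a_i$ for each $i$. Because every arrangement in $\{d_n\}$ is a rearrangement of $\{1, 2, \ldots, n\}$, the rule $i \mapsto a_i$ is automatically a bijection of $\{1, \ldots, n\}$ onto itself, so $\Phi$ is well-defined and lands in the symmetric group. Injectivity is then immediate, since two arrangements inducing the same permutation must agree in every position and hence be the same word.

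The substance of the argument is to identify the image of $\Phi$ exactly. I would show that the forbidden adjacencies transfer faithfully under $\Phi$: the arrangement $a_1 \cdots a_n$ contains the block $i(i+1)$ if and only if there is an index $j$ with $\pi(j) = i$ and $\pi(j+1) = i+1$, that is, if and only if $\pi$ has a succession $\pi(j+1) = \pi(j) + 1$. Consequently $\Phi$ sends $\{d_n\}$ into the set of permutations of $\{1, \ldots, n\}$ having no succession, and conversely the one-line word of any succession-free permutation avoids all of $12, 23, \ldots, (n-1)n$ and therefore lies in $\{d_n\}$ and maps back to that permutation; this gives surjectivity onto the succession-free permutations.

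To be certain this codomain is neither too large nor too small, I would invoke Lemma \ref{lemm23}: the succession-free permutations are governed by the very inclusion--exclusion count $\sum_{k=0}^{n-1}(-1)^k \binom{n-1}{k}(n-k)!$ that enumerates $\{d_n\}$, so the two sets have the same cardinality and $\Phi$ is forced to be a bijection between them. This simultaneously explains the restriction $n \geq 2$, where the pattern conditions become nonvacuous, and sets up the dictionary under which the class structure of Propositions \ref{propo43} and \ref{propo45} can be reread for permutations.

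The main obstacle I anticipate is conceptual rather than computational: since $d_n < n!$ for $n \geq 2$, the map $\Phi$ cannot surject onto the entire symmetric group, so the care lies in stating precisely which permutations occur as images and in checking that the translation between ``a forbidden adjacent block $i(i+1)$ in the word'' and ``a succession in the permutation'' is exact in both directions, with no boundary slip at the first or last position. Once that correspondence is pinned down, the bijection follows at once.
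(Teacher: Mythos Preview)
Your argument is correct as far as it goes, but it establishes a different (and essentially tautological) bijection from the one the paper has in mind. Your map $\Phi$ simply reads the word $a_1\cdots a_n$ as the one-line notation of $\pi$, so it is the identity between two names for the same object; the only content is the observation that ``avoids the blocks $12,23,\ldots,(n-1)n$'' and ``has no succession $\pi(j+1)=\pi(j)+1$'' are synonymous. The paper instead treats $\lambda\in\{d_n\}$ as a permutation and sends it to the arrangement $\lambda'$ obtained by letting $\lambda$ act on the word $12\cdots n$; concretely this is the \emph{inverse} permutation in one-line notation (as in the example $42153\mapsto 32514$). The nontrivial point the paper is making is that succession-freeness is preserved under inversion, so $\lambda'\in\{d_n\}$ again, while the extra pattern $n1$ need not be avoided by $\lambda'$ even if it was by $\lambda$. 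That closure under inversion is precisely what feeds into Definition~\ref{defi52} and Proposition~5.3, where a permutation is said to avoid the patterns according to where it sends $12\cdots n$. Your identity map gives a perfectly good bijection onto the succession-free permutations, but it does not supply the ingredient the paper actually uses downstream; if you want to match the paper, replace $\Phi$ by $\lambda\mapsto\lambda^{-1}$ and check that $\lambda$ has an adjacency $i(i{+}1)$ iff $\lambda^{-1}$ does.
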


\begin{proof}
Let us consider any linear arrangement $\lambda$ in $\{d_n\}$ as representing a permutation in one-line notation and the mapping $\sigma$ of the arrangement $12\ldots n$ by the permutation. Then the digits in $\lambda$ represent the second line in the permutation, and since such digits avoid the patterns $j$, $j+1$, $j = 1, \ldots, n-1$, the digits $i$, $i+1$ in the line above them \textit{ie}. the digits in $12\ldots n$ get mapped to an arrangement $\lambda'$ that avoids successive patterns, hence $\lambda'$ belongs either to $\{d_n\}$ or $\{D_n\}$.  However, the map $\sigma(1) = k +1$, $\sigma(n) = k$ is not ruled out so the pattern $n1$ may be formed, which is not allowed in $\{D_n\}$. The fact that the map is bijective follows by inverse mappings of permutations.
\end{proof}

As an example of the Proposition, consider the linear arrangement 42153 in $\{d_5\}$. If we consider this arrangement as a permutation in one-line notation, then the arrangement maps 12345 to 32514, which contains the pattern 51, not allowed in $\{D_5\}$.

Now we define permutations that avoid patterns $12, 23, \ldots, (n-1)n$, $n1$ in terms of the linear arrangements they produce.

\begin{defn}\label{defi52}
A permutation (in one-line notation) \textit{avoids the patterns}\linebreak $12, 23, \ldots, (n-1)n$, $n1$ if it maps the arrangement $12\ldots n$  to  an arrangement in $\{D_n\}$.
\end{defn}

\begin{prop}
The permutations (in one-line notation) that avoid the\linebreak patterns $12, 23, \ldots, (n-1)n$, $n1$ are not equidistributed for $n \geq 3$.
\end{prop}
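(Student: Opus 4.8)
The plan is to reduce the statement about permutations to a statement about the arrangement class $\{D_n\}$, and then to locate two classes of unequal size. By Definition~\ref{defi52} together with Proposition~\ref{propo51}, a permutation $\tau$ (in one-line notation) avoids $12, 23, \ldots, (n-1)n, n1$ precisely when the word it produces from $12\ldots n$ — namely the one-line word of $\tau^{-1}$ — lies in $\{D_n\}$. Hence the set of pattern-avoiding permutations is exactly $\{\rho^{-1} : \rho \in \{D_n\}\}$, a set of cardinality $D_n$. In keeping with the definition of a class, I would partition these permutations by their leading entry $\tau(1)$ and transport this partition across the inversion bijection: $\tau(1)=k$ holds exactly when the arrangement $\rho=\tau^{-1}\in\{D_n\}$ carries the digit $1$ in position $k$. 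The permutation class sizes are therefore the numbers $N_k := \#\{\rho \in \{D_n\} : \rho \text{ has } 1 \text{ in position } k\}$, and the proposition becomes the assertion that the $N_k$ are not all equal for $n\geq 3$.

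Next I would try to exhibit the inequality by carrying over the asymmetry that already broke equidistribution for $\{d_n\}$ in the preceding proposition. Recall $\{d_n\}=\{D_n\}\cup\{d_{n1}\}$ and that the $n1$-mapping of Lemma~\ref{lemm21} sends the class of $\{D_{n-1}\}$ beginning with $1$ into the block headed by $n1$, while Lemma~\ref{lemm47} records that this leaves the first class of $\{d_{n1}\}$ empty. The idea is that the pattern $n1$ treats the end positions $1$ and $n$ of a linear word differently from how the interior patterns $12, \ldots, (n-1)n$ treat the positions they touch, so I would compare the extreme classes $N_1$ and $N_n$ — the permutations fixing $1$ versus those sending $1$ to the last slot — against an interior class, using the decomposition of $\{D_n\}$ according to whether the digit $1$ is preceded by the digit $n$.

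The main obstacle is exactly this last step, and it is genuinely delicate. The set $\{D_n\}$ carries the cyclic value-shift symmetry sending each entry $v$ to its cyclic successor; this map fixes positions, permutes the digits transitively, and stabilizes $\{D_n\}$, so it forces many first-coordinate statistics of $\{D_n\}$ to be uniform. One must check very carefully that the position-of-$1$ statistic $N_k$ is not among these uniform ones, since a naive symmetry count would instead equate all the $N_k$ and collapse the argument. I therefore expect the crux to be isolating a feature of the $n1$-pattern that survives this symmetry — most plausibly the fact that, unlike the interior patterns, $n1$ cannot ``wrap around'' the two ends of a linear word — and then pinning down at least two distinct values among the $N_k$ by a direct count, first verifying the base cases ($D_4=8$, $D_5=45$) to fix the pattern. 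If the position-of-$1$ statistic turns out to be uniform after all, then the proposition must be understood through a permutation-specific notion of class other than the leading entry, and making that notion precise — so that it inherits the non-uniformity of $\{d_n\}$ rather than the symmetry of $\{D_n\}$ — would become the true content of the proof.
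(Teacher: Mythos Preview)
Your translation is correct: by Definition~\ref{defi52} the pattern-avoiding permutations are $\{\rho^{-1}:\rho\in\{D_n\}\}$, and their class sizes (by leading entry) are the numbers $N_k=\#\{\rho\in\{D_n\}:\rho_k=1\}$. Your worry about the cyclic value-shift is well founded, and it is decisive. The map $\psi_c$ that adds $c$ modulo $n$ to every entry preserves $\{D_n\}$ and acts freely, so $\{D_n\}$ breaks into orbits of size $n$; within each orbit the position of $1$ runs through all of $1,\ldots,n$ exactly once, since the position of $1$ in $\psi_c\rho$ is the position of $1-c\bmod n$ in $\rho$. Hence $N_k=D_n/n=Der_{n-1}$ for every $k$, and the pattern-avoiding permutations \emph{are} equidistributed. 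You can check this against Table~\ref{tabA1}: for $n=4$ the inverses of the eight elements of $\{D_4\}$ are $1324,1432,2143,2413,3214,3241,4132,4321$, two in each class.

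So the obstacle you flagged in your last paragraph is not a missing step but a genuine obstruction: under the paper's own definitions of ``class'' and of ``avoids the patterns'', the statement as written fails, and no argument along your lines can succeed. The paper's proof takes a different tack but does not close the gap either: it observes that inversion carries some elements of $\{D_n\}$ into $\{d_{n1}\}$, and that this never happens from the class of $\{D_n\}$ beginning with $1$. That shows the image set differs from $\{D_n\}$ as a set, but it says nothing about whether the image itself has equal-sized classes --- and by the orbit argument above, it does.
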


\begin{proof}
From the proof of Proposition \ref{propo51}, we see that arrangements in $\{D_n\}$ may map to arrangements in $\{d_{n1}\}$ under the one-line permutation map, so $\{D_n\}$, which is equidistributed, does not map to itself. In particular,\linebreak arrangements in $\{D_n\}$ that start with 1 will never map $12\ldots n$ to arrangements in $\{d_{n1}\}$ under the one-line permutation map, unlike arrangements from other classes.
\end{proof}

Next we define deranged permutations in similar way as in Definition \ref{defi52}.

\begin{defn}
A permutation (in one-line notation) is \textit{deranged} if it maps the arrangement $12\ldots n$  to an arrangement in $\{Der_n\}$.
\end{defn}

\vspace{-1em}Using this definition, we see that Proposition \ref{propo45} does hold in terms of both deranged linear arrangements and deranged permutations.

\begin{prop}\label{propo55}
For $n \geq 2$, there is a bijection between deranged linear arrangements $\{Der_n\}$ \textup{(}ie. derangements\textup{)} and permutations in one-line notation.
\end{prop}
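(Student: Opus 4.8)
The plan is to follow the template of Proposition~\ref{propo51}, but carried out inside the smaller class $\{Der_n\}$, and to exploit the fact that the defining property of a derangement---having no fixed point---is invariant under passage to the inverse permutation. First I would set up the correspondence exactly as in Proposition~\ref{propo51}: given a deranged linear arrangement $\lambda=\lambda_1\lambda_2\cdots\lambda_n\in\{Der_n\}$, read it as a permutation $\sigma$ in one-line notation, so that $\sigma(i)=\lambda_i$. As in the example following Proposition~\ref{propo51}, this permutation sends the arrangement $12\cdots n$ to the arrangement $\lambda'=\sigma^{-1}(1)\,\sigma^{-1}(2)\cdots\sigma^{-1}(n)$, that is, to the one-line notation of $\sigma^{-1}$.

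Next I would check that the correspondence respects the derangement property in both directions. Since $\lambda\in\{Der_n\}$, we have $\lambda_i=\sigma(i)\neq i$ for every $i$, so $\sigma$ has no fixed point; but $\sigma(i)=i$ if and only if $\sigma^{-1}(i)=i$, so $\sigma^{-1}$ has no fixed point either, and therefore $\lambda'=\sigma^{-1}\in\{Der_n\}$ as well. By the Definition just preceding this Proposition, this says precisely that $\sigma$ is a deranged permutation. Conversely, any deranged permutation maps $12\cdots n$ to a member of $\{Der_n\}$, and reading that permutation in one-line notation returns a fixed-point-free arrangement, again a member of $\{Der_n\}$.

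This invariance is the step I expect to be the crux, and it is exactly what separates this Proposition from the negative results earlier in Section~5: for the class $\{D_n\}$ the forbidden pattern $n1$ is not preserved by the one-line action, so $\{D_n\}$ fails to map to itself and spills partly into $\{d_{n1}\}$, whereas the fixed-point-free condition is self-inverse and so $\{Der_n\}$ is carried into itself. Once this is in hand, bijectivity is immediate: the identification of an arrangement with its one-line permutation is invertible, it is injective because distinct arrangements give distinct permutations, and by the previous paragraph its image is exactly the set of deranged permutations, with inversion $\sigma\mapsto\sigma^{-1}$ realizing the defining ``maps $12\cdots n$ to a derangement'' condition. A final bookkeeping remark I would include is the cardinality check that both the deranged linear arrangements and the deranged permutations number $Der_n$, which is consistent with the bijection and is what lets the equidistribution statement of Proposition~\ref{propo45} transfer to permutations.
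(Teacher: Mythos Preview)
Your proposal is correct and follows essentially the same approach as the paper: read a deranged arrangement as a permutation in one-line notation, observe that its action on $12\cdots n$ produces another derangement, and conclude bijectivity from the existence of inverse permutations. Your version is more explicit than the paper's about the key invariance step---that $\sigma(i)=i$ if and only if $\sigma^{-1}(i)=i$, so the fixed-point-free condition is preserved under inversion---which the paper leaves implicit in the phrase ``since these digits have no fixed points, the digits \dots\ get mapped to derangements.''
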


\begin{proof}
If we consider deranged arrangements in $\{Der_n\}$ as representing permutations in one-line notation and the mapping of $12\ldots n$ by the\linebreak permutations, then, as in Proposition \ref{propo51}, the digits in the arrangements\linebreak represent the second line in the permutations. Since these digits have no fixed points, the digits $i$, $i+1$ in $12\ldots n$ get mapped to derangements. The fact that the map is bijective follows by the existence of inverse mappings for permutations.
\end{proof}

As an example of the Proposition, consider the deranged arrangement 3142\linebreak in $\{Der_4\}$. If we consider this arrangement as a permutation in one-line notation, then the arrangement 1234 gets mapped to 2413, which is in $\{Der_4\}$.

\begin{prop}
The deranged permutations are equidistributed for all $n \geq 2$.
\end{prop}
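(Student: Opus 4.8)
The plan is to obtain the equidistribution of the deranged permutations by transporting Proposition \ref{propo45} across the bijection established in Proposition \ref{propo55}. First I would make the identification explicit: a permutation is deranged exactly when it is a derangement, and Proposition \ref{propo55} shows that sending each such permutation to the arrangement it produces from $12\ldots n$ is a bijection onto $\{Der_n\}$. Thus the set of deranged permutations is matched one-to-one with the set of deranged arrangements, and every arrangement in $\{Der_n\}$ is the image of exactly one deranged permutation.

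The second, and most delicate, step is to check that this bijection carries classes to classes. I would assign to each deranged permutation the class determined by the first digit of the arrangement in $\{Der_n\}$ that it produces; under Proposition \ref{propo55} this is precisely the first-digit class of the image arrangement, so the partition of the deranged permutations coincides, block for block, with the partition of $\{Der_n\}$ into its classes. The point to be careful about here is to fix a single convention for the first digit, namely that of the produced arrangement rather than of the permutation's own one-line word, so that no spurious reindexing is introduced when passing between the two readings.

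Finally I would invoke Proposition \ref{propo45} together with the count of its classes: for $n \geq 2$ the arrangements in $\{Der_n\}$ form $n-1$ nonempty classes, each of cardinality $d_{n-1}$, the class beginning with the digit $1$ being the only empty one because no derangement places $1$ in the first position. Since the bijection matches these blocks with the classes of deranged permutations, the latter also decompose into $n-1$ nonempty classes of the common size $d_{n-1}$, which is exactly the equidistribution claimed. The only real work, as noted, lies in the bookkeeping of the second step; once the class correspondence is pinned down, the result is immediate from the two cited propositions.
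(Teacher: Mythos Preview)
Your proposal is correct and follows essentially the same route as the paper's proof, which simply cites Proposition~\ref{propo55} and Proposition~\ref{propo45} in one line. You have supplied the bookkeeping the paper leaves implicit---in particular the choice to read the class of a deranged permutation as the first digit of the arrangement it produces, so that the bijection of Proposition~\ref{propo55} becomes class-preserving by construction---but the underlying argument is the same.
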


\begin{proof}
From the bijection in Proposition \ref{propo55}, and the fact that deranged arrangements are equidistributed (Proposition \ref{propo45}).
\end{proof}

We end this note with an easy application of one of the counting relationships derived previously.
\section{Another object counted by $\boldsymbol{\langle D_n\rangle }$}

We see that the sequence $\langle D_n\rangle = \{0, 1, 0, 3, 8, 45, 264, 1855, 14832,\ldots\}$ not only counts linear arrangements that avoid the patterns $12, 23, \ldots, (n-1)n, n1$, but also counts the permutations that have exactly one fixed point. This can be seen directly from inclusion-exclusion, since the formula that holds for exactly $m$ conditions is given by:
\[E_m=S_m-\binom{m+1}{1}S_{m+1}+\binom{m+2}{2}S_{m+2}+\cdots+(-1)^{n-m}\binom{n}{n-m}S_n.\]
For $m = 1$, this gives:
\[E_1=S_1-\binom{2}{1}S_2+\binom{3}{2}S_3+\cdots+(-1)^{n-1}\binom{n}{n-1}S_n.\]
Upon substitution of
\[S_j=\binom{n}{j}(n-j)!, \quad j= 1,\ldots,n,\]
this reduces to
\[E_1=n!\sum_{k=0}^{n-1}\frac{(-1)^k}{k!},\]
which is just Equation \ref{eq2} for $D_n$ right after Proposition \ref{propo24}.

Alternatively, this can be seen from the following final proposition.

\begin{prop}
The permutations that have exactly one fixed point are\linebreak counted by the sequence $\langle D_n\rangle$.
\end{prop}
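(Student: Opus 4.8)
The plan is to count directly the permutations of $\{1,2,\ldots,n\}$ having exactly one fixed point and then recognize the count as $D_n$ by invoking Lemma \ref{lemm31}. First I would observe that such a permutation is determined by two independent choices: selecting which of the $n$ elements is to be the fixed point, and arranging the remaining $n-1$ elements so that none of them is fixed. The first choice can be made in $\binom{n}{1}=n$ ways.

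Next I would argue that, once a single element has been designated as fixed, the permutation restricted to the remaining $n-1$ elements must be fixed-point-free; otherwise the permutation would have more than one fixed point, contradicting the hypothesis. By the definition of $Der_{n-1}$ there are exactly $Der_{n-1}$ such derangements on the remaining $n-1$ elements. Since the two choices are independent, the multiplication principle yields $n\cdot Der_{n-1}$ permutations with exactly one fixed point.

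Finally I would invoke Lemma \ref{lemm31}, which states $D_n = n\,Der_{n-1}$, to conclude that the number of permutations with exactly one fixed point equals $D_n$ for every $n$. Hence this family of permutations is enumerated term-by-term by the sequence $\langle D_n\rangle$, in agreement with the inclusion-exclusion computation given above via $E_1$. This gives a short, purely combinatorial proof to complement the generating-count already presented.

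The computation is routine and I do not expect a serious obstacle; the only point requiring care is the passage between the two descriptions of a derangement. One must verify that deleting the single fixed point from a permutation of $\{1,\ldots,n\}$ produces a genuine fixed-point-free permutation of the remaining $(n-1)$-element set, and conversely that every such derangement extends uniquely to a permutation of $\{1,\ldots,n\}$ with the prescribed single fixed point. Spelling out this bijection makes the count exact and closes the argument.
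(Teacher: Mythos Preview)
Your proposal is correct and follows essentially the same argument as the paper: choose the unique fixed point in $n$ ways, derange the remaining $n-1$ elements in $Der_{n-1}$ ways, and then invoke Lemma~\ref{lemm31} to identify $n\,Der_{n-1}$ with $D_n$. Your write-up is somewhat more detailed in spelling out the bijection between one-fixed-point permutations and pairs (fixed point, derangement of the rest), but the underlying idea is identical.
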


\begin{proof}
If we let $h(n,k)$ be the number of permutations of $n$ objects with exactly $k$  fixed points, we see that  $h(n,1) = n h(n-1,0)$  holds since the left hand side counts the number of permutations with exactly one fixed point, while the right-hand side counts the $n$ ways to choose a fixed point and the number of ways to derange $n-1$ other elements. But the right-hand side is just $n Der_{n-1}$, which we have seen is equal to $D_n$  by Lemma \ref{lemm31}.  Hence $h(n,1) = D_n$ and the proof follows.
\end{proof}

\section{Conclusions}

We have counted the number of linear arrangements that avoid certain patterns for the sets $\{D_n\}$, $\{d_n\}$, and $\{d_{n1}\}$, and have also obtained equations for the derangement numbers $Der_n$ in terms of them. We see that both $\{D_n\}$ and $\{d_{n1}\}$ share the equidistribution property of the deranged arrangements, $\{Der_n\}$, and that if we define deranged permutations using one-line notation, these permutations are equidistributed as well.

We also see that the sequence $\langle D_n\rangle$  not only counts linear arrangements that avoid the patterns $12, 23, \ldots, (n-1)n, n1$, but also the permutations that have exactly one fixed point. Even though this result can be obtained by\linebreak inclusion-exclusion, it is an easy consequence of the counting relations developed. An explicit bijection between $\{D_n\}$ and the permutations that have exactly one fixed point would be interesting.

Finally, we see that the sequence $\langle D_n\rangle$ is very conveniently described since it nearly follows the derangement numbers, as shown in Proposition \ref{propo24}.

\section*{References}

[1] R.A. Brualdi, Introductory Combinatorics (1992), 2nd edition.

[2] R.P. Stanley, Enumerative Combinatorics, Vol. 1 (2011), 2nd edition.  

\newpage
\begin{center}
{\Large \textbf{APPENDIX}}
\end{center}
\appendix
\begin{table}[h!]
  \centering
  {
  \begin{tabular}{lc}
  \begin{tabular}{|c|c|}
  \hline
    $\ \ \ \{D_2\}=\emptyset$\ \ \ & $\{d_2\}=\{21\}$ \\
    \hline
  \end{tabular} & \\
  \begin{tabular}{|c|}
    \hline
    $\{D_3\}=\{d_3\}$ \\
    \hline
    132 \\
    213 \\
    321 \\
     \\
     \\
     \\
     \\
     \\
    \hline
  \end{tabular} & \hspace{-0.4cm}\begin{tabular}{cc|c|c|}
    \cline{3-4}
     &  & \multicolumn{2}{|c|}{$\{d_4\}=\{d_{41}\} \cup \{D_4\}$} \\
     \hline
    \hspace{-0.21cm}\vline$\hspace{0.5cm}\{D_3\}$ & \vline\hspace{0.4cm}\ $\{D_4\}$\hspace{0.4cm} & \hspace{0.4cm}$\{d_{41}\}$\hspace{0.4cm} & \hspace{0.4cm}$\{D_4\}$\hspace{0.4cm} \\
    \hline
   \hspace{-0.47cm}\vline\hspace{0.4cm} 132 & \vline\hspace{0.42cm} 1324\ \ \ \ \ & 4132 & 1324 \\
   \hspace{-0.47cm}\vline\hspace{0.4cm} 213 & \vline\hspace{0.42cm} 1432\ \ \ \ \  & 2413 & 1432 \\
   \hspace{-0.47cm}\vline\hspace{0.4cm} 321 &\vline\hspace{0.42cm} 2143\ \ \ \ \  & 3241 & 2143 \\
   \hspace{-1.51cm}\vline\hspace{0cm}  & \vline\hspace{0.54cm}2431\ \ \ \ \ &  & 2431 \\
   \hspace{-1.51cm}\vline\hspace{0cm}  & \vline\hspace{0.54cm}3142\ \ \ \ \ &  & 3142 \\
   \hspace{-1.51cm}\vline\hspace{0cm}  & \vline\hspace{0.54cm}3214\ \ \ \ \  &  & 3214 \\
   \hspace{-1.51cm}\vline\hspace{0cm}  & \vline\hspace{0.54cm}4213\ \ \ \ \  &  & 4213 \\
   \hspace{-1.51cm}\vline\hspace{0cm}  & \vline\hspace{0.54cm}4321\ \ \ \ \  &  & 4321 \\
    \hline
  \end{tabular}\\

  \end{tabular}}
  \\[1em]
  {
  \hspace*{0.2cm}\begin{tabular}{|c|c|c|c|c|c|}
    \hline
    $\{D_4\}$ & \multicolumn{5}{|c|}{$\{D_5\}$} \\
    \hline
    \hspace{0.67cm}1324\hspace{0.67cm} & \hspace{0.45cm}13254\hspace{0.45cm} & \hspace{0.3cm}21354\hspace{0.3cm} &\hspace{0.25cm}31425\hspace{0.25cm} & \hspace{0.4cm}41325\hspace{0.4cm} &\hspace{0.35cm}52143\hspace{0.35cm} \\
    1432 & 13524 & 21435 & 31524 & 41352 & 52413 \\
    2143 & 13542 & 21543 & 31542 & 41532 & 52431 \\
    2431 & 14253 & 24135 & 32154 & 42135 & 53142 \\
    3142 & 14325 & 24153 & 32415 & 42153 & 53214 \\
    3214 & 14352 & 24315 & 32541 & 42531 & 53241 \\
    4213 & 15243 & 25314 & 35214 & 43152 & 54132 \\
    4321 & 15324 & 25413 & 35241 & 43215 & 54213 \\
	    & 15432 & 25431 & 35421 & 43521 & 54321 \\
    \hline
  \end{tabular}
\\[1em]
  \hspace*{0.2cm}\begin{tabular}{|c|c|c|c|c|c|}
    \hline
    \multicolumn{6}{|c|}{$\{d_5\}=\{d_{51}\}\cup \{D_5\}$} \\
    \hline
    $\{d_{51}\}$ & \multicolumn{5}{|c|}{$\{D_5\}$} \\
    \hline
    \hspace{0.6cm}51324\hspace{0.6cm} & \hspace{0.4cm}13254\hspace{0.4cm} &\hspace{0.3cm}21354\hspace{0.3cm} &\hspace{0.25cm}31425\hspace{0.25cm} & \hspace{0.45cm}41325\hspace{0.45cm} &\hspace{0.35cm}52143\hspace{0.35cm} \\
    51432 & 13524 & 21435 & 31524 & 41352 & 52413 \\
    25143 & 13542 & 21543 & 31542 & 41532 & 52431 \\
    24351 & 14253 & 24135 & 32154 & 42135 & 53142 \\
    35142 & 14325 & 24153 & 32415 & 42153 & 53214 \\
    32514 & 14352 & 24315 & 32541 & 42531 & 53241 \\
    42513 & 15243 & 25314 & 35214 & 43152 & 54132 \\
    43251 & 15324 & 25413 & 35241 & 43215 & 54213 \\
         & 15432 & 25431 & 35421 & 43521 & 54321 \\
    \hline
  \end{tabular}}
  \caption{Arrangements in $\{D_n\}$ and $\{d_n\}$ up to $n = 5$.}\label{tabA1}
\end{table}

\begin{table}
  \centering
  {
  \begin{tabular}{|r|r|r|r|c|}
    \hline
    $n$\quad & $Der_n\qquad $ & $D_n$\qquad\ & $d_n$\qquad\ & $Der_n - D_n$ \\
    \hline
   0 & 1 & 0 & & $+1$ \\
    1 & 0 & 1 & 1 &  $-1$ \\
    2 & 1 & 0 & 1 & $+1$ \\
    3 & 2 & 3 & 3 &  $-1$ \\
    4 & 9 & 8 & 11 & $+1$ \\
    5 & 44 & 45 & 53 &  $-1$ \\
    6 & 265 & 264 & 309 & 	$+1$ \\
    7 & 1.854 & 1.855 & 2.119 &  $-1$ \\
    8 & 14.833 & 14.832 & 16.687 & $+1$ \\
    9 & 133.496 & 133.497 & 148.329 &  $-1$ \\
    10 & 1.334.961 & 1.334.960 & 1.468.457 & $+1$ \\
    11 & 14.684.570 & 14.684.571 & 16.019.531 &  $-1$ \\
    12 & 176.214.841 & 176.214.840 & 190.899.411 & $+1$ \\
    13 & 2.290.792.932 & 2.290.792.933 & 2.467.007.773 &  $-1$ \\
    14 & 32.071.101.049 & 32.071.101.048 & 34.361.893.981 & $+1$ \\
    15 & 481.066.515.734 & 481.066.515.735 & 513.137.616.783 &  $-1$ \\
    \hline
  \end{tabular}}
  \caption{Values for  $Der_n$, $D_n$ and $d_n$ up to $n = 15$.}\label{tabA2}
\end{table}
\vspace{7cm}\


\end{document}